\newtheorem{thm}{Theorem}[section]
\newtheorem{lem}[thm]{Lemma}
\newtheorem{rem}[thm]{Remark}
\newtheorem{pro}[thm]{Proposition}
\newtheorem*{mthma}{Main Theorem A}
\newtheorem*{mthmb}{Main Theorem B}
\begin{document}

\title[Ramification and unicity theorems for Gauss maps of complete space-like stationary surfaces in four-dimensional Lorentz-Minkowski space]{Ramification and unicity theorems for Gauss maps of complete space-like stationary surfaces in four-dimensional Lorentz-Minkowski space} 


\author{\fnm{Li} \sur{Ou}}\email{lou19@fudan.edu.cn}




\affil{\orgdiv{School of Mathematics}, \orgname{Sichuan university}, \orgaddress{\street{No.24, Renmin South Road}, \city{Chengdu}, \postcode{610064}, \state{Sichuan Province}, \country{China}}}




\abstract{ 
In this paper, we  investigate the value distribution properties for Gauss maps of space-like stationary surfaces in four-dimensional Lorentz-Minkowski space $\mathbb{R}^{3,1}$, focusing on aspects such as the number of totally ramified points and unicity properties. We not only obtain general conclusions similar to situations in four-dimensional Euclidean space, but also consider the space-like stationary surfaces with rational graphic Gauss image, which is an extension of degenerate space-like stationary surfaces. } 


\keywords{Gauss map, stationary surface, ramification, unicity theorem}


\pacs[MSC Classification]{53A10, 53C42, 32A22, 51B20, 30C15 }

\maketitle

\section{Introduction}\label{sec1}
It is well known that value distribution properties for Gauss maps of regular minimal surfaces  in $\mathbb R^n$ play  a crucial  role in the theory of minimal surfaces. For a minimal surface $M$ in $\mathbb R^3$, the Gauss map is defined by the unit normal vector $G(p)\in \mathbb{S}^2$, for $p\in M$.  The surface $M$ is canonically considered as an open Riemann surface with some conformally metric $ds^2$ and by the minimality of $M$, the Gauss map is a meromorphic function under the sphere stereographic projection. S. S. Chern \cite{MR0180926} introduced the generalized Gauss map $G$ for an oriented regular surface in  $\mathbb R^n$, which map $p\in M$ to the point in 
$Q_{n-2}=\{[(z_1,\cdots, z_n)]\in \mathbb {CP}^{n-1}: z_1^2+\cdots +z_n^2=0\}$ corresponding to the oriented tangent plane of $M$ at $p$.
Then $G$ is a holomorphic map from the Riemann surface $M$ to $\mathbb {CP}^{n-1}$. So there are many analogous results  between Gauss maps of minimal surfaces and meromorphic mappings.

One of them is the Picard theorem. Through the efforts of R. Osserman \cite{Om-pf, R.O-large, R.O-Global}, F. Xavier \cite{F.Xavier}, Mo-Osserman \cite{Mo-Osserman}, H. Fujimoto \cite{H.F1} finally proved that  the  Gauss map of a non-flat complete  minimal surface in $\mathbb R ^3$ can omit at most $4$ points in $\mathbb S^2$.  H. Fujimoto \cite{H.F1} also gave the estimate for the number of exceptional values for  Gauss map of  minimal surfaces in $\mathbb R ^4$.  The number "four" in $\mathbb R ^3$ is the best possible upper bound since a lot of examples of complete minimal surfaces whose Gauss maps miss $4$ points exist \cite{R.O-large, R.O-Survey}. And the geometry interpretation of the maximal number exceptional value is given by Y. Kawakami \cite{MR3078266} (for $\mathbb R ^3$) and R. Aiyama,
K. Akutagawa, S. Imagawa and Y. Kawakami \cite{yama} (for $\mathbb R ^4$ ). The number of exceptional values of Gauss map of minimal surfaces in $\mathbb R^n$ were investigated by Fujimoto \cite{MR1037406} and M. Ru \cite{MR1131437}. 
 
The second one is the ramification problem. In 1992, H. Fujimoto \cite{MR1167375} studied the ramification of  Gauss maps of complete regular minimal surfaces in $\mathbb R^3$. One says that $g: M\to \bar{\mathbb C}=C\cup\{\mathbb  \infty\}$ is ramified over a point $a_j \in \bar{\mathbb C}(1\le j\le q)$ with multiplicity at least $e_j(1\le j\le q)$ if all zeros of the function $g(z)-a_j$ have orders at least $e_j$. If the image of $g$ omits $a_j$, one will say that $g$ is ramified over $a_j$ with multiplicity $e_j=\infty$. 
Specifically, H. Fujimoto obtained the following result:
 \begin{thm} \cite{MR1167375}
 \label{r3}
  Let M be a non-flat complete minimal surface in $\mathbb R^3$. If there are $q(q>4)$ distinct points  $a_1,\cdots, a_q\in \bar{\mathbb C}$  such that the Gauss map of M is ramified over $a_j$ with multiplicity at least $e_j$ for each $j$, then 
  $$\sum_{j=1}^q(1-1/e_j)\le 4.$$
 \end{thm}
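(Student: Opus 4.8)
\noindent\emph{Sketch of the argument.} This statement strictly contains the Picard‑type theorem quoted above (take all $e_j=\infty$), so it cannot simply be deduced from it; instead I would follow Fujimoto's method of auxiliary flat metrics and argue by contradiction. Assume $M$ is non‑flat and complete and that its Gauss map $g$, regarded via stereographic projection as a non‑constant meromorphic function on the open Riemann surface underlying $M$, is ramified over distinct $a_1,\dots,a_q\in\bar{\mathbb C}$ ($q>4$) with multiplicities at least $e_1,\dots,e_q$, while $\gamma:=\sum_{j=1}^{q}(1-1/e_j)>4$. Passing to the universal cover --- again a complete, non‑flat minimal immersion with the same Gauss map, ramified to the same orders --- I may assume $M$ is simply connected, hence conformally $\mathbb C$ or the unit disc; composing $g$ with a rotation of $\mathbb R^3$, i.e.\ a suitable linear fractional transformation of $\bar{\mathbb C}$, I may assume $a_j\neq\infty$ for all $j$. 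Write $ds^2=(1+|g|^2)^2|\omega|^2$ for the induced metric, with $\omega=f\,dz$ locally. The goal is to manufacture a divergent curve in $M$ of finite $ds^2$‑length.

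The hypothesis $\gamma>4$ is consumed in exactly one place: the interval $\bigl(\tfrac{2}{\gamma-2},1\bigr)$ is then non‑empty, so one may fix $p$ with $\tfrac{2}{\gamma-2}<p<1$, whence $p(\gamma-2)>2$. Off the discrete set $E$ consisting of the poles of $g$ and the $g$‑preimages of $a_1,\dots,a_q$, I would introduce the flat conformal pseudo‑metric $d\tau^2$ whose density is the modulus of the holomorphic expression obtained from the Weierstrass data by weighting it with $|g'|^{\,p/(1-p)}$, $\prod_{j=1}^{q}|g-a_j|^{\,-p(1-1/e_j)/(1-p)}$, and an appropriate power of $|f|$. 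Three facts would have to be checked: (i) $d\tau^2$ is flat on its domain, $\log$ of its density being harmonic there; (ii) as every zero of $g-a_j$ has order at least $e_j$, the pole of $|g-a_j|^{-p(1-1/e_j)/(1-p)}$ is outweighed by the zero of $|g'|^{p/(1-p)}$, so $d\tau^2$ extends across every point of $E$ over an $a_j$ as a metric with at worst a conical singularity of angle $\geq2\pi$; (iii) $p(\gamma-2)>2$ forces the density of $d\tau^2$ to blow up at each pole of $g$ fast enough that such points lie at infinite $d\tau$‑distance. Thus $(M,d\tau^2)$ is flat, with only harmless conical singularities at finite distance, and is complete toward the poles of $g$.

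I would then invoke Fujimoto's lemma on flat metrics: $d\tau^2$ has a developing map $\Phi\colon\{|w|<R\}\to M$, a local isometry onto its image for some $0<R\le\infty$, the conical singularities of (ii) being harmless here. If $R=\infty$, then $\Phi\colon\mathbb C\to M$ is a non‑constant holomorphic map, so $g\circ\Phi\colon\mathbb C\to\bar{\mathbb C}$ is meromorphic and ramified over $a_1,\dots,a_q$ with multiplicities at least $e_j$; since $\gamma>2$, the classical ramification theorem over $\mathbb C$ (from the Second Main Theorem, or from the Ahlfors--Schwarz lemma for a curvature $\le-1$ cone metric on $\bar{\mathbb C}\setminus\{a_j\}$) forces $g\circ\Phi$, hence $g$, to be constant, contradicting non‑flatness. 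If $R<\infty$, then --- as $d\tau^2$ blows up at the poles of $g$ --- there is a direction $\theta_0$ along which the radial curve $r\mapsto\Phi(re^{i\theta_0})$, $0\le r<R$, diverges in $M$; along it one estimates $\Phi^{*}ds^2=(ds^2/d\tau^2)\,|dw|^2\le C\,(R-|w|)^{-2\beta}|dw|^2$ for some $\beta<1$ (here is where $p<1$ and the exponent $2$ of $(1+|g|^2)$ enter), so that this divergent curve has finite $ds^2$‑length --- impossible, $M$ being complete. Both cases being contradictory, $\gamma\le4$.

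The step I expect to be the main obstacle is the construction in the second paragraph: choosing the exact exponents in $d\tau^2$ and the exact admissible range of $p$ so that flatness, the extension across the fibres over the $a_j$, the blow‑up at the poles of $g$, and the finite‑radius length estimate all hold \emph{at once}; and, within the third paragraph, the boundary estimate for $\Phi^{*}ds^2$. The constant ``$4$'' is structural, not accidental: it is $2+2$, the first $2$ the threshold $\sum(1-1/e_j)>2$ of the ramification theorem on $\mathbb C$ used when $R=\infty$, the second $2$ the exponent of $(1+|g|^2)$ in the Weierstrass metric, which drives the finite‑radius estimate; known examples of complete non‑flat minimal surfaces whose Gauss map omits four values (all $e_j=\infty$) show that $4$ is best possible.
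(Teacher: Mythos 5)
First, a point of reference: the paper does not prove Theorem \ref{r3} at all --- it is quoted from Fujimoto \cite{MR1167375} --- so the only internal implementation of this machinery to compare against is the proof of Main Theorem A in Section \ref{sec3}. Measured against that, your architecture is the right one: reduce to the universal cover, dispose of the case $M=\mathbb C$ by the classical ramification theorem ($\gamma\le 2$, Theorem \ref{gpt}), and on the disc build a flat auxiliary metric from the Weierstrass data and the ramification divisor, run the developing-map/largest-disc argument, and contradict completeness with a divergent curve of finite $ds^2$-length. Your location of where $\gamma>4$ is consumed and your reading of the constant $4$ are also correct in spirit.

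The gap sits exactly where you predicted, and it is not just a matter of tuning: as written, your auxiliary metric has the weights upside down, and this breaks the argument in two places. (i) For the endgame you need $\Phi^{*}ds^2=(ds^2/d\tau^2)\,|dw|^2$ bounded by an integrable power of the Poincar\'e density of $\mathbb D(R)$; the only available tool is the Ahlfors--Schwarz lemma (Theorem \ref{12}), which bounds $|g'|\lambda_{\Omega}(g)$ from \emph{above}. Hence $ds^2/d\tau^2$ must contain $|g'|$ to a \emph{positive} power, i.e.\ the holomorphic density of $d\tau^2$ must carry $g'$ in the \emph{denominator} and $(g-a_j)^{1-1/e_j-\epsilon'}$ to \emph{positive} powers in the numerator --- the reciprocal of what you wrote; with your signs, Ahlfors--Schwarz yields only a useless lower bound. (ii) With the correct signs the density has a pole of order $>1$ at every zero of $g'$ and at every $g$-preimage of an $a_j$, and \emph{that} is the mechanism preventing the divergent curve $\Phi(l_a)$ from terminating at such a point (it would force $R=\int_{l_a}|dw|=\infty$); your ``harmless conical singularity of angle $\ge 2\pi$'' is in fact a critical point of the developing map, precisely where the largest-disc construction fails. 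Making (i) and (ii) hold simultaneously is why the exponents $1-1/e_j$ must be perturbed to $1-1/e_j-\epsilon'$ with $\epsilon,\epsilon'$ squeezed into a nonempty interval so that both $\tilde p<1$ and $\epsilon'\tilde p/(1-\tilde p)>1$ --- compare the choice of $\epsilon'$ and the two unnamed Lemmas in Section \ref{sec3}; without this perturbation a ramification point of order exactly $e_j$ gives $\nu_{\eta}=0$ and the divergent-curve lemma collapses. Finally, your item (iii) does not follow from your stated weights (at a pole of $g$ of order $\mu$ their combined order is $\tfrac{p}{1-p}(\mu\gamma-\mu-1)>0$ plus a nonnegative contribution from $|f|$, so the density \emph{vanishes} rather than blows up); in the genuine proof the poles of $g$ are controlled through the behaviour of $\lambda_{\Omega}$ near $\infty$ (equivalently, by using chordal distances $|g,a_j|$), and that is where the exponent $2$ of $(1+|g|^2)^2$ actually enters the numerology. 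Until the density is written down with the correct signs and the $\epsilon'$-regularization, the proposal is a plan rather than a proof.
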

In particular, if the Gauss map omits five distinct points, then $M$ must be flat. 
In 1993, M. Ru \cite{MR1191614} extended the ramification result to Gauss maps of minimal surfaces in $\mathbb R^n$. Afterwards, J. Lu \cite{MR2272139}, S. J. Kao \cite{MR1129370}, P. H. Ha, L. B. Phuong, P. D. Thoan, G. Dethloff \cite{MR3270425, MR3442592} studied the number of  exceptional values and ramification of the Gauss map of complete minimal surfaces in $\mathbb R^n$ on annular end. 

The third one is the unicity problem.  H. Fujimoto \cite{MR1219880} also investigated the uniqueness theorem for Gauss maps of minimal surfaces in $\mathbb R^n$, which is analogue to the Nevanlinna unicity theorem \cite{MR1555233} for meromorphic functions on the complex plane $\mathbb C$: Two meromorphic functions on $\mathbb C$ sharing $5$ distinct values must be identically equal to each other. Here, two functions $g_1, g_2$ share value $a$ means $g_1^{-1}(a)=g_2^{-1}(a)$. For the Gauss map of minimal surface,  H. Fujimoto proved the following theorem:
\begin{thm}\cite{MR1219880}\label{uni}
    Let $M$ and $\hat{M}$ be two non-flat minimal surfaces in $\mathbb R^3$ with their Gauss maps $g$ and $\hat{g}$ respectively. Suppose that there is a conformal diffeomorphism $\Phi$ from $M$ onto $\hat{M}$ and $g, \hat{g}$ share $q$ distinct points $a_1, a_2, \cdots, a_q$,  
    then the following statements hold:
    \begin{itemize}
     \item If $q\ge 7$ and either $M$ or $\hat{M}$ is complete, then $g\equiv \hat{g}\circ\Phi$.  
     \item If $q\ge 6$ and both  $M$ and $\hat{M}$ are complete and have finite total curvature, then $g\equiv \hat{g}\circ\Phi$. 
    \end{itemize}
\end{thm}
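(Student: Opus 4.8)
The plan is to argue by contradiction, using the analytic machinery that underlies Theorem~\ref{r3}. First, via the conformal diffeomorphism $\Phi$ we identify $\hat M$ with $M$ and regard $g$ and $\hat g$ (the latter composed with $\Phi$) as two meromorphic functions on one and the same open Riemann surface $M$; the desired conclusion then reads $g\equiv\hat g$. Record the Weierstrass--Enneper forms of the two induced metrics, $ds^2=(1+|g|^2)^2|\omega|^2$ and $d\hat s^2=(1+|\hat g|^2)^2|\hat\omega|^2$, with $\omega,\hat\omega$ holomorphic $1$-forms on $M$; after exchanging the two surfaces if necessary we may assume in the first case that $ds^2$ is complete. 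The hypothesis that $g,\hat g$ share $a_1,\dots,a_q$ means $g^{-1}(a_j)=\hat g^{-1}(a_j)$ as subsets of $M$, so the meromorphic function $g-\hat g$ vanishes on the whole set $E:=\bigcup_{j=1}^{q}g^{-1}(a_j)$. Suppose, for contradiction, that $g\not\equiv\hat g$.

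For the range $q\ge 7$ the crux is to manufacture, off a discrete subset of $M$, an auxiliary conformal pseudometric $d\tau^2$ assembled from the Weierstrass data, the difference $g-\hat g$, the logarithmic-derivative forms $dg/(g-a_j)$ and $d\hat g/(\hat g-a_j)$, and Fujimoto-type logarithmic factors near the fibres over the $a_j$; in a local coordinate $z$ it has the schematic shape $d\tau^2\sim\bigl|\,\omega\,(g-\hat g)\,g'\prod_{j}(g-a_j)^{-1}\bigr|^{2p}\bigl(\prod_{j}\log(\lambda|g-a_j|^{-2})\bigr)^{-2}\times(\text{symmetric }\hat g\text{ factors})\,|dz|^2$, with a free exponent $p>0$. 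One then verifies: \emph{(a)} its Gauss curvature is bounded above by a strictly negative constant off that discrete set --- the logarithmic factors attached to finitely many target points provide the negative curvature, the remaining holomorphic factors being flat; and \emph{(b)} $d\tau^2$ majorises a fixed positive multiple of $ds^2$ along every divergent curve in $M$. Granted \emph{(a)} and \emph{(b)}, Fujimoto's argument from the proof of Theorem~\ref{r3} applies and contradicts the completeness of $ds^2$ (roughly: \emph{(a)} forces every divergent curve to have finite $d\tau^2$-length, while completeness and \emph{(b)} force it to be infinite), so $g\equiv\hat g$. Verifying \emph{(b)} reduces to an inequality between orders of vanishing: the weight harvested from the $q$ shared values --- each value contributing, through the factors $(g-a_j)^{-1}$, $(\hat g-a_j)^{-1}$ and the forced vanishing of $g-\hat g$ on $E$, at least the effective ramification weight $1-\tfrac12$ for $g$ and for $\hat g$ --- must outweigh the number $4$ of Theorem~\ref{r3} together with an unavoidable slack, and the arithmetic balances precisely when $q\ge 7$. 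Pinning down this balance --- choosing a single exponent $p$ (and the constant $\lambda$) for which \emph{(a)} and \emph{(b)} hold simultaneously, while tracking how the logarithmic factors interact with the zeros and poles of $\omega$ and of $dg$, in particular over the points of $E$ and at the ends --- is the main obstacle.

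For the range $q\ge 6$ one exploits the extra rigidity of finite total curvature. Then each surface is conformally a compact Riemann surface $\bar M$ with finitely many punctures, $g$ and $\hat g$ extend to holomorphic maps $\bar M\to\bar{\mathbb C}$ of finite degrees $d$ and $\hat d$, and $\omega,\hat\omega$ have controlled (meromorphic) behaviour at the punctures. Assuming still $g\not\equiv\hat g$, one may either run a direct intersection/Riemann--Hurwitz count --- the zero divisor of $g-\hat g$, being the pullback under $(g,\hat g)$ of the diagonal of $\bar{\mathbb C}\times\bar{\mathbb C}$ (a section of $\mathcal O(1,1)$), has degree $d+\hat d$, while it contains $E$, whose cardinality is bounded below by applying Riemann--Hurwitz to $g$ and to $\hat g$ separately; the resulting linear inequalities in $d,\hat d,q$ become inconsistent once $q\ge 6$ --- or re-run the metric construction of the previous paragraph, now observing that finite total curvature forces $M$ to be parabolic, so that the slack in step \emph{(b)} may be taken to be $0$ and the threshold drops from $7$ to $6$. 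Either way one reaches a contradiction, so $g\equiv\hat g$, that is $g\equiv\hat g\circ\Phi$.
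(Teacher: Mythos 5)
The paper does not actually prove Theorem \ref{uni}: it is quoted from Fujimoto \cite{MR1219880} and used as a black box, so there is no in-paper proof to compare against. Your outline is nevertheless recognizably the same strategy as Fujimoto's original argument, which this paper imports wholesale for its Main Theorem B: the auxiliary pseudometric you describe is exactly the $d\tau^2$ of Lemma \ref{le} (the factors $|g,\hat g|^2$, $g'/(1+|g|^2)$, $\hat g'/(1+|\hat g|^2)$ and the logarithmic weights $\lambda,\hat\lambda$), and its strictly negative curvature together with the generalized Schwarz lemma is Lemma \ref{gu}. Two caveats. First, the way you combine \emph{(a)} and \emph{(b)} is not how the argument actually closes: one does not show that every divergent curve has finite $d\tau^2$-length. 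Instead one integrates an auxiliary holomorphic form $\tilde\eta$ built from the Weierstrass data to obtain a developing map $w=\int\tilde\eta\,dz$ with maximal local inverse $\tilde H:\mathbb D(R)\to M'$; negative curvature of $d\tau^2$ forces $R<\infty$; local order computations at the points where $g=\hat g$ or $g'\hat g'=0$ show that some radial segment maps to a curve divergent in $M$ (not merely in $M'$); and finally $\tilde H^*ds^2\le C\mu^{2p}|dw|^2$ with $p<1$ gives that curve finite $ds$-length, contradicting completeness. All of the quantitative content --- the admissible window for the exponent $p$ and for $\epsilon$ with $q-4>q\epsilon>0$, the order computations, and the comparison of $\tilde H^*ds^2$ with $\mu^{2p}$ --- is precisely what you defer as ``the main obstacle,'' so the proposal is a correct roadmap rather than a proof. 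Second, in the finite-total-curvature case your degree count is the right idea, but you must account for the punctures of the compactification: the sharing hypothesis $g^{-1}(a_j)=\hat g^{-1}(a_j)$ holds only on $M$, so the extended maps may disagree on fibres over the ends, and the ends have to be fed into the Riemann--Hurwitz and Chern--Osserman bookkeeping before the inequalities close at $q\ge 6$.
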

The number $7$ is the best possible since H. Fujimoto construct two mutually distinct  isometric complete minimal surfaces whose Gauss maps are distinct and have the same inverse images for  six points. Later, H. Fujimoto \cite{MR1243805}, J. Park and M. Ru \cite{MR3667235} gave generalizations of the unicity theorem to minimal surfaces in $\mathbb R^n (n>3)$.  After that, many mathematicians studied unicity theorem for Gauss maps of minimal surfaces (for more details, see \cite{MR3784760, MR3690412, MR2373945, MR4176747}).

 It is quite natural to study the value distribution problem for Gauss maps of complete space-like stationary surfaces (i.e. surfaces with zero mean curvature and positive-definite metric) in $n$-dimensional Lorentz space $\mathbb R^{n-1,1}$. E. Calabi \cite{Calabi} showed that any complete maximal space-like surface in $\mathbb R^{2,1}$ has to be affine linear, that is the Gauss map must be constant. Furthermore, the ramification problem and the unicity problem of weakly complete maxfaces in $\mathbb R^{2,1}$ are investigated by Y. Kawakami in \cite{MR3415658}. Here, maxfaces are maximal (or stationary) space-like surfaces with some admissible singularities, as
introduced by Umehara and Yamada \cite{MR2225080}.
 In \cite{Ou}, the author, C, Cheng and L. Yang \cite{Ou} proved the exceptional value theorem for Gauss maps of complete space-like stationary surfaces in $\mathbb R^{3,1}$:
 \begin{thm}
 Let M be a non-flat complete space-like stationary surface in $\mathbb R^{3,1}$, $(\psi_1,\psi_2)$ be the Gauss map of $M$, and $q_i$ be the number of exceptional values of $\psi_i(i=1,2)$.  If neither $\psi_1$ nor $\psi_2$ are constant, then $\min\{q_1,q_2\}\le 3$ or $q_1=q_2=4$.
 \end{thm}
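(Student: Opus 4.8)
\emph{Strategy.} The plan is to reduce the statement to H. Fujimoto's estimate for the number of exceptional values of the Gauss map of a complete minimal surface in $\mathbb{R}^4$ (\cite{H.F1}) by a ``Wick rotation'' of the timelike coordinate, which turns $M$ into an auxiliary complete minimal surface in $\mathbb{R}^4$ having the same two Gauss map components. First I would recall the Weierstrass-type representation of a spacelike stationary surface as set up in \cite{Ou}: for a conformal stationary immersion $x\colon M\to\mathbb{R}^{3,1}$, the vector $\phi=\partial x/\partial z=(\phi_1,\phi_2,\phi_3,\phi_4)$ is holomorphic with nullity relation $\phi_1^2+\phi_2^2+\phi_3^2-\phi_4^2\equiv 0$, the induced (complete) metric is $ds_M^2=2\langle\phi,\bar\phi\rangle\,|dz|^2=2(|\phi_1|^2+|\phi_2|^2+|\phi_3|^2-|\phi_4|^2)\,|dz|^2>0$, and $\psi_1,\psi_2$ are the two meromorphic functions obtained by projecting $[\phi]\in Q:=\{Z_1^2+Z_2^2+Z_3^2-Z_4^2=0\}\subset\mathbb{CP}^3$ to the two rulings of $Q$; in one standard normalization $\psi_1=(\phi_1+i\phi_2)/(\phi_3+\phi_4)$ and $\psi_2=(\phi_1-i\phi_2)/(\phi_3+\phi_4)$. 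Passing to the universal cover $\widetilde M$ (on which $\phi,\psi_1,\psi_2$ and the complete metric all lift and the counts $q_1,q_2$ are unchanged), I may assume $M$ is simply connected.

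The key step: set $\widetilde\phi:=(\phi_1,\phi_2,\phi_3,i\phi_4)$. It is holomorphic and Euclidean-null, $\widetilde\phi_1^2+\widetilde\phi_2^2+\widetilde\phi_3^2+\widetilde\phi_4^2=\phi_1^2+\phi_2^2+\phi_3^2-\phi_4^2=0$, so on the simply connected $M$ the map $\widetilde x:=2\,\mathrm{Re}\!\int\widetilde\phi\,dz\colon M\to\mathbb{R}^4$ is a well-defined minimal immersion, with induced metric $ds_{\widetilde x}^2=2\sum_{k=1}^4|\widetilde\phi_k|^2\,|dz|^2=ds_M^2+4|\phi_4|^2\,|dz|^2\ \ge\ ds_M^2$. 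Since $ds_M^2$ is complete, so is $ds_{\widetilde x}^2$; hence $\widetilde x$ is a \emph{complete} minimal surface in $\mathbb{R}^4$. Computing its tangent Gauss map by the ruling projections of $\{Z_1^2+\cdots+Z_4^2=0\}$ and using $\widetilde\phi_3\mp i\widetilde\phi_4=\phi_3\pm\phi_4$, one sees that its two components agree with $\psi_1,\psi_2$ up to fixed Möbius transformations; in particular they are nonconstant, $\widetilde x$ is non-flat, and the $k$-th component has exactly $q_k$ exceptional values. Applying Fujimoto's $\mathbb{R}^4$ theorem (\cite{H.F1}) — for a complete minimal surface in $\mathbb{R}^4$ whose Gauss map has two nonconstant components omitting $q_1$ and $q_2$ values, $\min\{q_1,q_2\}\le 3$ or $q_1=q_2=4$ — to $\widetilde x$ gives exactly the assertion.

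Along this route the only delicate point is the bookkeeping (fixing the conventions for the Weierstrass data and the ruling projections, handling poles of $\psi_k$ / zeros of the height differential, and verifying the Gauss map of $\widetilde x$); no real obstacle appears, because everything has been reduced to the known Euclidean theorem. The genuinely substantive ingredient is Fujimoto's $\mathbb{R}^4$ estimate itself, and if a self-contained argument were wanted I would reproduce his proof on $\widetilde M$. There the case $\widetilde M=\mathbb{C}$ is immediate (a nonconstant meromorphic function on $\mathbb{C}$ omits at most two values, so $q_k\le 2$ and the conclusion holds), while in the case $\widetilde M=\Delta$ one argues by contradiction: if the conclusion fails then $q_1,q_2\ge4$ and $\tfrac{1}{q_1-2}+\tfrac{1}{q_2-2}<1$, which allows one to choose exponents $\tau_i\in(2/q_i,1)$ and build from the omitted values $\{a_j\}$ of $\psi_1$ and $\{b_k\}$ of $\psi_2$ an auxiliary pseudo-metric of the form $|\omega|^2\,\dfrac{|\psi_1'|^2}{\prod_j|\psi_1-a_j|^{2\tau_1}}\,\dfrac{|\psi_2'|^2}{\prod_k|\psi_2-b_k|^{2\tau_2}}\,|dz|^2$ of controlled curvature; the generalized Schwarz--Ahlfors lemma together with the completeness of $ds_M^2$ then produce a divergent curve of finite $ds_M$-length, a contradiction. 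That curvature/length estimate is the real technical obstacle, and the monotonicity $ds_M^2\le ds_{\widetilde x}^2$ is exactly what absorbs the extra Lorentzian term $4|\phi_4|^2|dz|^2$ and reduces it to the Euclidean case.
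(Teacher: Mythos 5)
Your proposal is correct and follows essentially the same route as the paper: the Wick rotation $\varphi_4\mapsto i\varphi_4$ and the resulting metric comparison $ds_M^2\le ds^2_{\widetilde x}=2(1+|\psi_1|^2)(1+|\psi_2|^2)|dh|^2$ are exactly what the paper sets up in Section~2.2 and uses in Section~3 to transfer completeness to the associated minimal surface in $\mathbb{R}^4$ (with the same Gauss map components) and invoke Fujimoto's exceptional-value/ramification theorem there. The only cosmetic difference is that the paper phrases the reduction purely as an inequality between conformal metrics rather than constructing $\widetilde x$ explicitly, but the content is identical.
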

 This result cannot be further improved without additional assumptions, since every minimal surface in $\mathbb R^3$ is a space-like stationary surface in $\mathbb R^{3,1}$. They also considered the situation that Gauss image lies in a graph of a rational function of degree $m$, which contain the degenerate cases $(m\le 1)$. They  obtained the following result: 
 \begin{thm}\cite{Ou}
 \label{ethm}
 Let M be a non-flat complete space-like stationary surface in $\mathbb R^{3,1}$, whose Gauss map $(\psi_1,\psi_2)$ satisfies $\psi_2=f(\psi_1)$ with $f$ a rational function of degree $m$, then the number $q_1$ of the exceptional values of $\psi_1$ should satisfy $|E_f|\le q_1\le m-|E_f|+3$, where $E_f=\{z\in \bar{\mathbb C}, f(z)=\bar z\}$.
     
 \end{thm}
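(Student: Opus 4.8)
\emph{Lower bound.} In the Weierstrass-type representation used here, a space-like stationary surface in $\mathbb R^{3,1}$ has Gauss map $(\psi_1,\psi_2)$, a pair of meromorphic functions valued in $\mathbb{CP}^1\times\mathbb{CP}^1$, and an oriented tangent plane is space-like exactly when its image avoids the totally real locus $\overline\Delta:=\{(w,\bar w)\}$; equivalently, the induced positive-definite metric has the shape $ds^2=|\omega_0|^2\,|\psi_1-\overline{\psi_2}|^2$ up to a positive smooth factor, so it degenerates precisely on $\overline\Delta$. Hence if $c\in E_f$ and $\psi_1(p)=c$ for some $p\in M$, then $\psi_2(p)=f(c)=\bar c=\overline{\psi_1(p)}$, so the Gauss image at $p$ lies on $\overline\Delta$ and $ds^2$ degenerates there, which is impossible for a regular space-like surface. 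Thus every element of $E_f$ is omitted by $\psi_1$, giving $|E_f|\le q_1$.

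\emph{Upper bound: setup.} Assume for contradiction that $\psi_1$ omits $q_1\ge m-|E_f|+4$ distinct values $\alpha_1,\dots,\alpha_{q_1}$ (which include all of $E_f$, by the first part), and adapt Fujimoto's curvature method. First substitute $\psi_2=f(\psi_1)$ into the Weierstrass data: the poles of $\psi_2$ lie over $f^{-1}(\infty)$, a divisor of degree $m$ pulled back by $\psi_1$; the branch points of $f$ contribute where $f'=0$; and holomorphicity of the representation forms $\phi_k$ then determines the divisor of $\omega_0$ from these data and the poles of $\psi_1$. Now $ds^2$ is written in $\psi_1$, $f$, $\omega_0$ alone, and the factor $|\psi_1-\overline{f(\psi_1)}|$, whose only possible zeros would sit over $E_f$, is bounded below on $M$.

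\emph{Upper bound: the curvature argument.} For $\gamma$ in a suitable range one builds a conformal metric $d\tau^2$ on $M':=M\setminus\psi_1^{-1}(\infty)$ from $|\omega_0|^2$ by inserting a power of $|\psi_1'|$ and the factor $\big(\prod_{j=1}^{q_1}|\psi_1-\alpha_j|\big)^{-2\gamma}$ (renormalised to a genuine metric), the exponents being chosen so that $d\tau^2$ is flat away from a discrete set and $ds^2\le C\,d\tau^2$ holds for a constant $C$ (using the lower bound on $|\psi_1-\overline{f(\psi_1)}|$). The crucial point is that the set of admissible $\gamma$ is nonempty precisely when $q_1\ge m-|E_f|+4$: the degree-$m$ contribution of $f^{-1}(\infty)$, the branch divisor of $f$, and the behaviour of $|\psi_1-\overline{f(\psi_1)}|$ at the $|E_f|$ points over $\overline\Delta$ together shift the exponent constraint in exactly the way that converts Fujimoto's bound into $m-|E_f|+3$. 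Given such a $d\tau^2$: completeness of $ds^2$, together with $ds^2\le C\,d\tau^2$ and the blow-up of $d\tau^2$ toward $\psi_1^{-1}(\infty)$ (handled as in Fujimoto), makes $d\tau^2$ complete on $M'$; but $M$ is non-compact and $q_1\ge 3$, so $M'$ is hyperbolic, and the lift of $d\tau^2$ to its universal cover would be a complete flat conformal metric on a simply-connected surface, forcing that surface to be $\mathbb C$; then $\psi_1$, which omits at least three values, pulls back to a constant on $\mathbb C$ by Picard, contradicting non-flatness. Hence $q_1\le m-|E_f|+3$. (Alternatively, the last step can be run through the Osserman dichotomy---finite total curvature plus a Riemann--Hurwitz count, or a value-distribution argument on an annular end.)

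\emph{Main obstacle.} The delicate part is the bookkeeping in the middle of the argument: computing the divisor of $\omega_0$ exactly under $\psi_2=f(\psi_1)$, tracking how the $m$ poles of $f$, the branch points of $f$, and the set $E_f$ enter the exponent constraint for $\gamma$, and thereby pinning down the sharp constant---in particular explaining why $|E_f|$ enters with a negative sign---while verifying that the domination $ds^2\le C\,d\tau^2$ survives near the $|E_f|$ points over $\overline\Delta$. The remaining steps (uniformisation, transfer of completeness, the final Picard/flatness contradiction) are routine and parallel Fujimoto's treatment of the Euclidean cases.
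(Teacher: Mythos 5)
Your lower bound is exactly the paper's: regularity forces $\psi_1\neq\overline{\psi_2}=\overline{f(\psi_1)}$, so $\psi_1$ omits every point of $E_f$, giving $|E_f|\le q_1$. (For context: the paper does not reprove this theorem directly; it cites it from \cite{Ou} and observes that it is the special case $e_j\equiv\infty$ of Main Theorem A, whose proof occupies Section 3.)

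For the upper bound there is a genuine gap, and one step as written would fail. You assert that $|\psi_1-\overline{f(\psi_1)}|$ ``is bounded below on $M$'' and use this to obtain $ds^2\le C\,d\tau^2$. This is false in general: although $\psi_1$ omits the points of $E_f$, its image can accumulate at them, and $|\psi_1-\overline{f(\psi_1)}|\to 0$ there; moreover a lower bound is the wrong direction, since $ds^2=2|\psi_1-\overline{f(\psi_1)}|^2|dh|^2$ and dominating $ds^2$ by an auxiliary metric requires an \emph{upper} bound on this factor. The paper's key estimate --- precisely the ``bookkeeping'' your sketch defers --- is obtained by normalizing $\infty\in E_f$ via a M\"obius transformation (so $f(\infty)=\infty$ and $m=\deg P>\deg Q$), writing $E_f=\{c_1,\dots,c_l,\infty\}$, and proving
$$|f(z)-\bar z|\le C\Big(\prod_{i=1}^{l}|z-c_i|\Big)\Big(\prod_{j=1}^{s}|z-b_j|^{-r_j}\Big)(1+|z|^2)^{\frac{m-l}{2}},$$
where the $b_j$ are the zeros of $Q$ with orders $r_j$. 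The poles $b_j$ are absorbed into a zero-free holomorphic form $\omega=dh/\prod_j(\psi-b_j)^{r_j}$, the factors $|\psi-c_i|$ enter the auxiliary metric exactly as if the $c_i$ were additional omitted values, and the exponent of $(1+|\psi|^2)$ drops from $m$ to $m-l=m-|E_f|+1$; this is why $|E_f|$ appears with a negative sign in the bound $m-|E_f|+3$, and it is the one computation your proposal does not carry out. Since the sharp constant is the entire content of the statement, the proposal as it stands does not establish it. (Your endgame --- a complete flat conformal metric forcing the universal cover to be $\mathbb C$, then Picard --- is a legitimate variant of the divergent-curve argument via the Ahlfors--Schwarz lemma that the paper actually runs, but either version only becomes available once the exponent constraints have been pinned down.)
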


  In this paper, we study the ramification and the unicity problem for Gauss maps of complete space-like stationary surfaces in $\mathbb R^{3,1}$. Since every complete space-like stationary surface in $\mathbb R^{3,1}$ corresponds to a complete regular minimal surface in $\mathbb R^4$ which has same Gauss map, thus we can easily get Theorem \ref{rami}  and  Theorem \ref{unicity} for  ramification problem and unicity problem respectively. 

  Moreover, we  extend the ramification result of H. Fujimoto \cite{MR1167375}  and the unicity theorem of H. Fujimoto \cite{MR1219880} to the complete space-like stationary surfaces in $\mathbb R^{3,1}$, whose Gauss map satisfies $\psi_2=f(\psi_1)$ where $f$ is a rational function with degree $m$. In the case $m\le 1$, $M$ is called degenerate, which contains the minimal surfaces in $\mathbb R^3$, maximal surfaces in $\mathbb R^{2,1}$, 2-degenerate space-like stationary surface, and three types of space-like stationary graphs in $\mathbb R^{3,1}$ (see more details in \cite{Ou}).
  
We first give some basic preliminaries in section \ref{sec2}.  And in section \ref{sec3}, we study the ramification problem of Gauss map for space-like stationary surface in $\mathbb R^{3,1}$ and get the following theorem:
  
  \begin{mthma}
  Let $M\subset \mathbb R^{3,1}$ is a non-flat complete space-like stationary surface with Gauss map $G=(\psi_1,\psi_2)$ satisfying $\psi_2=f(\psi_1)$, where $f$ is a rational function of degree $m$. Set $E_f=\{z\in \bar{\mathbb C}, f(z)=\bar z\}$ and assume that there are $q(q>m-2|E_f|+3)$ distinct points $a_1,\cdots, a_q \in \bar{\mathbb C}\setminus E_f$, such that the Gauss map $\psi=\psi_1$ of $M$ is ramified over $a_j$ with multiplicity at least $e_j$ for each $j$, then 
\begin{equation} \label{mr}  
\gamma:=|E_f|+\sum_{j=1}^q(1-\frac{1}{e_j})\le m-|E_f|+3.
\end{equation}     
\end{mthma}
In equation $(\ref{mr})$, if all $e_j(1\le j\le q)$ are $\infty$, then we can easily get the Theorem \ref{ethm}.

In section \ref{sec4}, we study the unicity problem for Gauss maps of complete stationary space-like surfaces in $\mathbb R^{3,1}$ and get the following theorem:

 \begin{mthmb}
Let $M$ and $\hat{M}$ be two complete non-flat space-like stationary surfaces in $\mathbb R^{3,1}$ with Gauss maps $G=(\psi_1,\psi_2)$ and $\hat{G}=(\hat{\psi}_1, \hat{\psi}_2)$ respectively. Suppose $\psi_2=f(\psi_1)$ and $\hat{\psi}_2=f(\hat{\psi}_1)$ with $f$ a rational function of degree $m$ and $\Phi: M\to \hat{M}$ is  a conformal diffeomorphism. Assume there are $q$ points $a_1,a_2,\cdots, a_q\in \bar{\mathbb C}$ such that 
$$\psi_1^{-1}(a_i)=(\hat{\psi}_1\circ \Phi)^{-1}(a_i),  i=1,\cdots, q.$$ 
Then, we have $\psi_1\equiv \hat{\psi}_1\circ \Phi$ if  $q\ge m-|E_f| +6 $, 
where $E_f=\{z\in \bar{\mathbb C}, f(z)=\bar{z}\}$.
    
\end{mthmb}

\section{Preliminaries}\label{sec2}

\subsection{The Gauss map of space-like stationary surfaces in $\mathbb R^{3,1}$}\label{subsec2}
Denote $\mathbb R^{3,1}$ be the $4$-dimensional  Minkowski space with the  Minkowski inner product:
	\begin{equation*}
	\langle \mathbf{u},\mathbf{v}\rangle=u_1v_1+u_2v_2+u_3v_3-u_4v_4,
	\end{equation*}
 where $\mathbf{u}=(u_1,u_2,u_3,u_4), \mathbf{v}=(v_1,v_2,v_3,v_4) \in \mathbb R^{3,1}$. 
	$\mathbf{u}\in \mathbb R^{3,1}$ is  space-like if $\langle \mathbf{u},\mathbf{u}\rangle>0$; $\mathbf{u}$ is  time-like if $\langle \mathbf{u},\mathbf{u}\rangle<0$; $\mathbf{u}$ is called a null vector or a light-like vector if $\langle \mathbf{u},\mathbf{u}\rangle=0$.

 Let $\mathbf{x}:M\to \mathbb R^{3,1}$ be an oriented space-like stationary surface in the Minkowski space. That is the mean curvature vector field $\mathbf H$  of $M$ vanishes everywhere, and  the pull-back metric $ds^2=\langle d\mathbf{x},d\mathbf{x}\rangle$	is positive-definite everywhere. 
$M$ is stationary if and only if the restriction of each coordinate function on $M$ is harmonic.

Let $(u,v)$ be  local isothermal parameters in a neighborhood of $p\in M$, then the tangent space at $p$  is $T_pM=\text{span}\{\mathbf{x}_u,\mathbf{x}_v\}$. 
The Gauss map of $M$ is defined by
$$G: p\in M\mapsto T_p M\in \mathbf{G}_{1,1}^2,$$
where $ \mathbf{G}_{1,1}^2$ is the Lorentz--Grassmann manifold consisting of all oriented space-like 2-plane in $\mathbb R^{3,1}$. 
 Denote
	\begin{equation*}
	    Q_{1,1}=\{[\mathbf z]\in \mathbb{CP}^3:z_1^2+z_2^2+z_3^2-z_4^2=0\},
	\end{equation*}
	\begin{equation*}
	Q_{1,1}^+=\{[\mathbf z]\in Q_{1,1}:|z_1|^2+|z_2|^2+|z_3|^2-|z_4|^2>0\}.	\end{equation*}
Since $\langle \mathbf{x}_u,\mathbf{x}_u\rangle=\langle\mathbf{x}_v,\mathbf{x}_v\rangle>0$, and  $\langle \mathbf{x}_u,\mathbf{x}_v\rangle=0$, then $[\mathbf{x}_z]=[\frac{1}{2}(\mathbf{x}_u-i\mathbf{x}_v)]\in Q_{1,1}^+$, where $z=u+iv$. We may identify $\mathbf{G}_{1,1}^2$ with $Q_{1,1}^+$ via the map
$$i: \Pi=\text{span}\{\mathbf{u},\mathbf{v}\}\in \mathbf{G}_{1,1}^2\mapsto [\mathbf{z}]=[\mathbf{u}-i\mathbf{v}]\in Q_{1,1}^+.$$

\begin{pro}
		\cite{Ou}
	\label{com_str}
		For $Q_{1,1}$ and $Q_{1,1}^+$, we have:
		\begin{enumerate}
			\item[(1)] $Q_{1,1}$ is biholomorphic to $\bar{\mathbb C}\times \bar{\mathbb C}=\mathbb{S}^2\times \mathbb{S}^2$, with $\bar{\mathbb C}=\mathbb C\cup \{\infty\}$
			the extended complex plane.
			\item[(2)] $Q_{1,1}^+$ is biholomorphic to $\{(w_1,w_2)\in \bar{\mathbb C}\times \bar{\mathbb C}:w_2\neq \bar{w}_1\}$, where $\overline{\infty}=\infty$.
		\end{enumerate}
	\end{pro}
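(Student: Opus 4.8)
The plan is to handle both parts by a single explicit construction: reduce the quadratic form defining $Q_{1,1}$ to the Segre normal form $\zeta_1\zeta_4-\zeta_2\zeta_3$, read off the product structure, and then transport the indefinite Hermitian form $H(\mathbf z)=|z_1|^2+|z_2|^2+|z_3|^2-|z_4|^2$ through this identification to pin down $Q_{1,1}^+$.

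For (1), I would first note that $Q_{1,1}$ is a smooth quadric, since the gradient $(2z_1,2z_2,2z_3,-2z_4)$ of the defining polynomial never vanishes on $\mathbb{CP}^3$. I then introduce the linear isomorphism $L:\mathbb C^4\to\mathbb C^4$ given by $\zeta_1=z_1+iz_2,\ \zeta_2=z_4-z_3,\ \zeta_3=z_4+z_3,\ \zeta_4=z_1-iz_2$, under which direct multiplication gives $z_1^2+z_2^2+z_3^2-z_4^2=\zeta_1\zeta_4-\zeta_2\zeta_3$. Thus $L$ descends to an automorphism of $\mathbb{CP}^3$ carrying $Q_{1,1}$ onto the Segre quadric $\{\zeta_1\zeta_4=\zeta_2\zeta_3\}$, which is exactly the image of the Segre embedding $\sigma:([s{:}t],[u{:}v])\mapsto[su{:}sv{:}tu{:}tv]$. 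Since $\sigma$ is a biholomorphism onto its image, composing with $L$ yields a biholomorphism $Q_{1,1}\to\bar{\mathbb C}\times\bar{\mathbb C}=\mathbb S^2\times\mathbb S^2$. Explicitly, on the quadric the relevant ratios agree, so the inverse is $[\mathbf z]\mapsto(w_1,w_2)$ with $w_1=\zeta_1/\zeta_3=\zeta_2/\zeta_4$ and $w_2=\zeta_2/\zeta_1=\zeta_4/\zeta_3$; the defining relation $\zeta_1\zeta_4=\zeta_2\zeta_3$ guarantees these are well-defined $\bar{\mathbb C}$-valued holomorphic maps covering all of $Q_{1,1}$, including the points at infinity.

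For (2), I would transport $H$ to the $\zeta$-coordinates. Using $z_1=(\zeta_1+\zeta_4)/2,\ z_2=(\zeta_1-\zeta_4)/(2i),\ z_3=(\zeta_3-\zeta_2)/2,\ z_4=(\zeta_3+\zeta_2)/2$, a short computation gives $H=\tfrac12(|\zeta_1|^2+|\zeta_4|^2)-\mathrm{Re}(\zeta_2\bar\zeta_3)$. Working in the affine chart $\zeta_3\neq0$ with the normalized representative $\zeta_3=1,\ \zeta_1=w_1,\ \zeta_2=w_1w_2,\ \zeta_4=w_2$ forced by the product structure, this becomes $H=\tfrac12(|w_1|^2+|w_2|^2)-\mathrm{Re}(w_1w_2)=\tfrac12\,|w_1-\bar w_2|^2$. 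Since rescaling the homogeneous representative multiplies $H$ by a positive constant, the sign of $H$ is well-defined on $Q_{1,1}$ and depends only on $(w_1,w_2)$; hence $H\geq0$ everywhere, with equality exactly when $w_2=\bar w_1$. Therefore $Q_{1,1}^+=\{H>0\}$ is carried biholomorphically onto $\{(w_1,w_2):w_2\neq\bar w_1\}$.

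The classical part (1) is routine once the Segre normal form is exhibited; the real work is the Hermitian computation in (2). The main subtleties I expect are that $H$ is not a genuine function on $\mathbb{CP}^3$ (only its sign is invariant, so one must argue via positively scaled representatives rather than a single global formula), and the book-keeping at the boundary of the affine chart—where $w_1$ or $w_2$ equals $\infty$—to verify that the vanishing locus of $H$ is indeed the full conjugate-diagonal $\{w_2=\bar w_1\}$ in $\bar{\mathbb C}\times\bar{\mathbb C}$, with the point $(\infty,\infty)$ covered by the convention $\overline\infty=\infty$ and continuity.
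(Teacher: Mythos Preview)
Your proof is correct, and the biholomorphism you construct coincides exactly with the map $\Psi$ the paper writes down immediately after the proposition: in your notation $w_1=\zeta_1/\zeta_3=(z_1+iz_2)/(z_3+z_4)$ and $w_2=\zeta_4/\zeta_3=(z_1-iz_2)/(z_3+z_4)$ on the main chart, with the same extensions at infinity. The paper itself does not give an argument here---the result is simply quoted from \cite{Ou} and only the explicit formula for $\Psi$ is recorded---so your Segre reduction and the Hermitian computation $H=\tfrac12|w_1-\bar w_2|^2$ supply precisely the details that the paper omits.
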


 The holomorphic map $\Psi: Q_{1,1}\to \bar{\mathbb C}\times \bar{\mathbb C}$ is defined by
 \begin{itemize}
     
     \item $\Psi([\mathbf z])=(\frac{z_1+iz_2}{z_3+z_4},  \frac{z_1-iz_2}{z_3+z_4})$ whenever $z_3+z_4\ne 0$;

     \item $\Psi([\mathbf z])=(\frac{z_4-z_3}{z_1-iz_2}, \infty)$ whenever $z_3+z_4=z_1+iz_2 \equiv 0\text{ and } z_1-iz_2\ne 0$;

      \item $\Psi([\mathbf z])=(\infty, \frac{z_4-z_3}{z_1+iz_2})$ whenever $z_3+z_4=z_1-iz_2 \equiv 0\text{ and } z_1+iz_2\ne 0$;
      
      \item $\Psi([\mathbf z])=(\infty, \infty)$ whenever $z_3+z_4=z_1-iz_2=z_1+iz_2 \equiv 0$.
 \end{itemize}
 The metric of $Q_{1,1}^+$ in terms of $w_1$ and $w_2$ is :
	
	\begin{equation}\label{g2}
	g=\text{Re}\left[\frac{4d\bar{w}_1dw_2}{(\bar{w}_1-w_2)^2}\right].
	\end{equation}

\subsection{Weierstrass representation of space-like stationary Surfaces in $\mathbb R^{3,1}$}\label{subsubsec2}
Denote
	\vspace{-6pt}
	\begin{equation*}
	\varphi=(\varphi_1,\varphi_2,\varphi_3,\varphi_4):=\Big(\frac{\partial x_1}{\partial z},\frac{\partial x_2}{\partial z},\frac{\partial x_3}{\partial z},\frac{\partial x_4}{\partial z}\Big)dz.
	\end{equation*}
	Then, the harmonicity of $x_k$ forces $\varphi_k$ to be a holomorphic 1-form that can be globally defined on $M$.
	$[\mathbf{x}_z]\in Q_{1,1}^+$ is equivalent to saying that
	\vspace{-6pt}
	\begin{eqnarray}
	\varphi_1^2+\varphi_2^2+\varphi_3^2-\varphi_4^2&=&0,\label{phi1}\\
	|\varphi_1|^2+|\varphi_2|^2+|\varphi_3|^2-|\varphi_4|^2&>&0\label{phi2}.
	\end{eqnarray}	
	Let $$(\psi_1,\psi_2)=\Psi(\varphi),\quad  dh=\frac{1}{2}(\varphi_3+\varphi_4).$$ Then, $\psi_1,\psi_2$ are meromorphic functions on $M$, and $dh$ is a holomorphic 1-form on $M$. 
	If $dh \equiv 0$, then (\ref{phi1}) implies 
 $$\varphi_1-i\varphi_2\equiv 0 \text{ or } \varphi_1+i\varphi_2\equiv 0.$$
	Hence, $\varphi=(\varphi_1,\pm i\varphi_1,\varphi_3,-\varphi_3)$ and the zeros of $\varphi_1$ and $\varphi_3$ do not coincide.
	Otherwise, the zeros of $dh$ are discrete. 

Thus, the Weierstrass representation of space-like stationary surfaces in $\mathbb{R}^{3,1}$ can be stated as follows:
	\begin{thm}\cite{ma-wang1}
	Let $\psi_1,\psi_2$ be meromorphic functions and $dh$ be a holomorphic 1-form on a Riemann surface $M$. If $\psi_1, \psi_2, dh $ satisfy the regularity conditions $(1), (2)$ and the period condition $(3)$:
	\begin{enumerate}

	    \item [(1)] $\psi_1\neq\bar{\psi_2}$ on $M$, and their poles do not coincide;

	    \item [(2)]  The zeros of $dh$ coincide with the poles of $\psi_1$ or $\psi_2$ with the same order; 
	
	    \item [(3)]  For any closed curve $C$ on $M$:
	    \vspace{-6pt}
	\begin{equation*}
	\int_{C}\psi_1dh=-\overline{\int_{C}\psi_2 dh}\quad,\quad Re\int_{C}dh=0=Re\int_{C}\psi_1\psi_2 dh\ .\label{c7}
		\end{equation*}
		\end{enumerate}
		Then, the 
 following equation defines a space-like stationary surface 
$\mathbf x:M\rightarrow\mathbb{R}^{3,1}$:
	\vspace{-6pt}
\begin{equation}
\mathbf x=2Re\int\big(\psi_1+\psi_2,-i(\psi_1-\psi_2),1-\psi_1\psi_2,1+\psi_1\psi_2\big)dh\ .\label{c8}
\end{equation}
			
Conversely, every space-like stationary surface $\mathbf x:M\rightarrow\mathbb{R}^{3,1}$ can be represented as equation (\ref{c8}), where $dh$, $\psi_1$, $\psi_2$ satisfy the conditions $(1), (2)$, and  $(3)$.
	\end{thm}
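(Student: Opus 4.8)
The plan is to set up the explicit dictionary between the Weierstrass data $(\psi_1,\psi_2,dh)$ and the holomorphic one-forms $\varphi=(\varphi_1,\varphi_2,\varphi_3,\varphi_4)$ already introduced, and then verify each direction by matching the defining conditions. Writing $x_k=2\mathrm{Re}\int\varphi_k$, the representation $(\ref{c8})$ amounts to prescribing
\begin{equation*}
\varphi_1=(\psi_1+\psi_2)dh,\quad \varphi_2=-i(\psi_1-\psi_2)dh,\quad \varphi_3=(1-\psi_1\psi_2)dh,\quad \varphi_4=(1+\psi_1\psi_2)dh.
\end{equation*}
Conversely, starting from $\varphi$ I would set $dh=\tfrac12(\varphi_3+\varphi_4)$ and $(\psi_1,\psi_2)=\Psi(\varphi)$ using the explicit formulas for $\Psi$; these are inverse constructions, so the content of the theorem is that conditions $(1)$–$(3)$ are precisely equivalent to $\varphi$ defining a single-valued space-like stationary immersion.

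For the sufficiency direction I would argue in four steps. First, holomorphicity: each $\varphi_k$ is manifestly a meromorphic one-form, and condition $(2)$ — that the zeros of $dh$ cancel the poles of $\psi_1,\psi_2$ with matching order — is exactly what guarantees that $\psi_i\,dh$ and $\psi_1\psi_2\,dh$ extend to honest holomorphic one-forms on $M$, so that each $x_k$ is harmonic and $M$ is stationary. Second, single-valuedness: $\mathbf{x}$ is well defined on $M$ (not merely its universal cover) iff $\mathrm{Re}\int_C\varphi_k=0$ for every closed curve $C$, and a short computation shows this system is equivalent to condition $(3)$ — the two real conditions $\mathrm{Re}\int_C dh=\mathrm{Re}\int_C\psi_1\psi_2\,dh=0$ kill the periods of $\varphi_3,\varphi_4$, while $\int_C\psi_1 dh=-\overline{\int_C\psi_2 dh}$ kills those of $\varphi_1,\varphi_2$. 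Third, the isotropy relation $(\ref{phi1})$: substitution gives $\varphi_1^2+\varphi_2^2=4\psi_1\psi_2\,dh^2$ and $\varphi_3^2-\varphi_4^2=-4\psi_1\psi_2\,dh^2$, so their sum vanishes identically, which is the conformality of $\mathbf{x}$. Fourth, the space-like/regularity condition $(\ref{phi2})$: the same substitution collapses to
\begin{equation*}
|\varphi_1|^2+|\varphi_2|^2+|\varphi_3|^2-|\varphi_4|^2=2\,|\psi_1-\bar\psi_2|^2\,|dh|^2,
\end{equation*}
which is positive exactly where $\psi_1\neq\bar\psi_2$ and $dh\neq0$; condition $(1)$ supplies the former and condition $(2)$ controls the zeros of $dh$ so that the induced metric stays positive-definite, giving a genuine immersion.

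For the necessity direction, given a space-like stationary $\mathbf{x}$ I would take $\varphi_k=\tfrac{\partial x_k}{\partial z}dz$; stationarity (harmonicity of the coordinates) makes each $\varphi_k$ holomorphic, the isothermal choice of $z$ yields $(\ref{phi1})$, and the positive-definiteness of $ds^2$ yields $(\ref{phi2})$. I would then invert the dictionary: with $dh=\tfrac12(\varphi_3+\varphi_4)$ one reads off $\varphi_1+i\varphi_2=2\psi_1\,dh$ and $\varphi_1-i\varphi_2=2\psi_2\,dh$, and using $(\ref{phi1})$ in the form $\varphi_1^2+\varphi_2^2=\varphi_4^2-\varphi_3^2=(\varphi_4-\varphi_3)(\varphi_4+\varphi_3)$ forces $\varphi_4-\varphi_3=2\psi_1\psi_2\,dh$, so that $\varphi_3,\varphi_4$ also reproduce $(\ref{c8})$. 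Conditions $(1)$ and $(2)$ then follow from $(\ref{phi2})$ and from the requirement that $dh$ match the poles of $\psi_i$ to keep each $\varphi_k$ holomorphic and $ds^2$ finite and nonzero, while $(3)$ is forced by single-valuedness of the original immersion.

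The main obstacle, and where I would spend the most care, is the regularity bookkeeping around the common zeros of $dh$ and poles of $\psi_1,\psi_2$: one must check that condition $(2)$ makes each $\varphi_k$ holomorphic and simultaneously keeps $|\psi_1-\bar\psi_2|^2|dh|^2$ bounded away from $0$ there, so that the metric is neither singular nor degenerate. A related delicate point is the branch structure of $\Psi$ and the degenerate locus $dh\equiv0$: when $\varphi_3+\varphi_4\equiv0$ the relation $(\ref{phi1})$ forces $\varphi_1-i\varphi_2\equiv0$ or $\varphi_1+i\varphi_2\equiv0$, and one must verify that the alternative branches of $\Psi$ listed above consistently assign $\psi_1$ or $\psi_2$ the value $\infty$, so that the representation and conditions $(1)$–$(3)$ remain meaningful in that case.
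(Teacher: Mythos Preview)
The paper does not actually prove this theorem: it is quoted from \cite{ma-wang1}, and the surrounding text only records the dictionary $(\psi_1,\psi_2)=\Psi(\varphi)$, $dh=\tfrac12(\varphi_3+\varphi_4)$, the degenerate case $dh\equiv0$, and the induced-metric formula $ds^2=2|\psi_1-\bar\psi_2|^2|dh|^2$ without supplying an argument. So there is no ``paper's own proof'' to compare against.

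That said, your outline is correct and is exactly the standard verification one carries out for such a Weierstrass representation; it also dovetails with the scaffolding the paper places around the statement. Your identification of $\varphi_k$ in terms of $(\psi_1,\psi_2,dh)$ matches the paper's $\Psi$, your isotropy and metric computations reproduce $(\ref{phi1})$, $(\ref{phi2})$ and $(\ref{phi5})$, and your reading of the period condition $(3)$ as $\mathrm{Re}\int_C\varphi_k=0$ is the right translation. The points you flag as delicate---the pole/zero bookkeeping behind condition~$(2)$ that keeps each $\varphi_k$ holomorphic while $|\psi_1-\bar\psi_2|^2|dh|^2$ stays finite and nonzero, and the branch analysis of $\Psi$ on the locus $\varphi_3+\varphi_4\equiv0$---are precisely the places where a full proof has genuine content, and the paper's pre-theorem remark on $dh\equiv0$ is addressing the second of these. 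Nothing in your plan is off-track; you would simply be reconstructing the argument of \cite{ma-wang1} rather than comparing to anything in the present paper.
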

	The induced metric is: 
	\vspace{-6pt}
		\begin{equation}\label{phi5}
	ds^2=\langle \mathbf{x}_z,\mathbf{x}_{\bar{z}}\rangle=\langle \varphi,\bar{\varphi}\rangle=2|\psi_1-\bar{\psi_2}|^2|dh|^2.
	\end{equation}
	
Actually, as shown in \cite{Ou},  $\psi_1$ and $\psi_2$ correspond to  two null vectors $\mathbf{y},\mathbf{y}^*$ respectively, which  span the normal plane of $M$ at the considered point. Moreover, $\psi_2\neq \bar{\psi}_1$ is equivalent to saying that $\mathbf{y}\neq \mathbf{y}^*$
	everywhere. 
	
\begin{rem}
If we let $\psi_1\equiv-\frac{1}{\psi_2}$, (\ref{c8}) yields the representation for a minimal surface in $\mathbb R^3$. If $\psi_1\equiv\frac{1}{\psi_2}$, then  we can obtain the Weierstrass representation for a maximal surface in $\mathbb R^{2,1}$. Actually, all minimal surfaces in $\mathbb R^3$ and maximal surfaces in $\mathbb R^{2,1}$ are space-like stationary surfaces in $\mathbb R^{3,1}$.
\end{rem}


\begin{rem}\label{mob}
The induced action on $\mathbb S^2$  of the Lorentz transformation is just the M\"{o}bius transformation on  $\mathbb S^2$ (i.e., a fractional linear transformation on $ \bar{\mathbb C}$). Since $(\psi_1, \psi_2)=\Psi([\mathbf{x}_{z}])$, then  the Gauss maps $\psi_1,\psi_2$ and the height differential $dh$  can be transformed as below:
\vspace{-6pt}
$$\psi_1 \Rightarrow \frac{a\psi_1+b}{c\psi_1+d}, \quad \psi_2\Rightarrow \frac{\bar{a}\psi_2+\bar{b}}{\bar{c}\psi_2+\bar{d}},\quad dh\Rightarrow (c\psi_1+d)(\bar{c}\psi_2+\bar{d})dh,$$
where $S=\left(\begin{array}{cc}
 a & b\\
 c & d
\end{array}\right)\in SL(2,\mathbb{C})$.
\end{rem}

Let
$$\varphi^*_k=\varphi_k, k=1,2,3,\quad \varphi^*_4=i\varphi_4,$$
we have 
\begin{equation*}
   \sum_{k=1}^{4}(\varphi^*_k)^2=\sum_{k=1}^{4}\varphi^2_k-\varphi_4^2=0,
\end{equation*}
and 
\begin{equation} \label{var}
\sum_{k=1}^{4}|\varphi^*_k|^2=\sum_{k=1}^{4}|\varphi_k|^2 \ge \sum_{k=1}^{3}|\varphi_k|^2-|\varphi_4|^2>0.
\end{equation}
Then 
\begin{equation*}
    \mathbf x^*=Re\int(\varphi^*_1,\varphi^*_2,\varphi^*_3,\varphi^*_4 )dz
\end{equation*}
defines a simply-connected regular minimal surface $M^*$ in $\mathbb R^4$. Conversely, given a simply-connected regular minimal surface $M^*$ in $\mathbb R^4$, the corresponding stationary surface $M$ may not be space-like. Thus $\mathbf x\leftrightarrow \mathbf x^*$  gives a one-to-one correspondence between all simply-connected generalized stationary surface $M$ in $\mathbb R^{3,1}$ and all simply-connected generalized minimal surface $M^*$ in $\mathbb R^4$.

If $M$ is complete, $M^*$ is also complete by the equation (\ref{var}). Thus, every simply-connected complete space-like stationary surface in $\mathbb R^{3,1}$  can be viewed as a simply-connected complete regular minimal surface in $\mathbb R^4$.

\subsection{Space-like stationary surfaces with rational graphical Gauss image}\label{subsec4}

Let $M$ be a space-like stationary surface in $\mathbb R^{3,1}$. If the Gauss image $G(M)$ of $M$ lies in a hyperplane $H_A\subset Q_{1,1}$	with $[A]\in \mathbb {CP}^{2,1}$, then $M$ is called {\bf degenerate}. If $M$ is 1-degenerate, then the Gauss map $(\psi_1,\psi_2)$ satisfies $\psi_2=M_S(\psi_1):=\frac{a\psi_1+b}{c\psi_1+d}$, where $S:=\left(\begin{array}{cc}
     a & b  \\
    c & d
\end{array}\right)\in SL(2,\mathbb C)$. $\psi_1\ne \bar{\psi_2}$ implies that $\psi_1$ must omit all the points in $E_S$, where 
$$E_S=\{z\in \overline{\mathbb C}, M_S(z)=\bar{z}\}.$$ 
Under the conjugate similarity equivalence relation ($S_1\stackrel{conj}{\sim}S_2 \text{ if and only if 
 } S_2=\pm \bar{T}S_1T^{-1}$, where $T=\left(\begin{array}{cc}
     a & b  \\
    c & d
\end{array}\right)\in SL(2,\mathbb C)$. See section 2.4 in \cite{Ou} for more details ), $M$ is one of the following  types:
\begin{enumerate}
\item $M$ is a maximal surface in $\mathbb R^{2,1}$. In this case,  $S\stackrel{conj}{\sim} \left(\begin{array}{cc}
    1 & 0  \\
    0 & 1
\end{array}\right) $ and $|E_S|=\infty$ ; 
    \item $M$ is a  degenerate space-like stationary surface of hyperbolic type, which is congruent to entire space-like stationary graph of $F:\mathbb R^2\to \mathbb R^{1,1}$. In this case,  $S\stackrel{conj}{\sim} \left(\begin{array}{cc}
     e^u & 0  \\
    0 & e^{-u}
\end{array}\right) $ with $u\in(0, \infty)$  and $|E_S|=2$ ;
   
     \item $M$ is a minimal surface in $\mathbb R^3$ or a  degenerate space-like stationary surface of elliptic type, which can be deformed by a minimal surface in $\mathbb R^3$. In this case, $S\stackrel{conj}{\sim} \left(\begin{array}{cc}
     0 & ie^{-i\alpha}  \\
    ie^{i\alpha} & 0
\end{array}\right) $ with $\alpha \in (0,\frac{\pi}{2}]$  and  $|E_S|=0$;
     
      \item $M$ is a  degenerate space-like stationary surface of parabolic  type. In this case, $S\stackrel{conj}{\sim} \left(\begin{array}{cc}
     1 & 1\\
    0 & 1
\end{array}\right) $ and  $|E_S|=1$.
\end{enumerate}

If $M$ is 2-degenerate, that is the Gauss image $G(M)$ lies in the intersection of two linear independent hyperplanes in $Q_{1,1}$. Then $\psi_1$ or $\psi_2$ is a constant function. By  Theorem 4.3 of \cite{Ou},  $M$ has to be an entire graph $F:(x_1,x_2)\in \mathbb R^2\to h(x_1,x_2)\mathbf y_0\in \mathbb R^{1,1}$, where $h(x_1,x_2)$ is a harmonic map and $\mathbf y_0$ is a null vector. Since the Gauss curvature $K\equiv 0$, the universal covering space $\Tilde{M}$ of $M$ is conformally equivalent to the whole complex plane $\mathbb C$.

More generally, if the Gauss map $(\psi_1,\psi_2)$ satisfies $\psi_2=f(\psi_1)$ with $f$  a rational function of degree $m$, then $\psi_1\ne \bar{\psi_2}$ implies that $\psi_1$ must omit all the points in $E_f$, where  
$$E_f=\{z\in \overline{\mathbb C}, f(z)=\bar{z}\}. $$

Then we have the following results:
\begin{pro}\cite{Ou}
Let $M$ be a non-flat complete space-like stationary surface in $\mathbb R^{3,1}$. If the Gauss map $(\psi_1,\psi_2)$ satisfies $\psi_2=f(\psi_1)$ with $f$  a rational function of degree $m$, then
\begin{itemize}
\item $m\le 5;$\\
    
    \item If $m\ge 2$, then $m-1\le |E_f|\le \frac{m+3}{2}$. \\
 \item   The number of exceptional values of $\psi_1$ in $\bar{\mathbb C}$ cannot exceed $m-|E_f|+3$.  
\end{itemize}
\end{pro}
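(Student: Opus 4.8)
The plan is to pass to the associated complete regular minimal surface in $\mathbb R^{4}$, bound $|E_f|$ from below by a topological count, and obtain the estimate on exceptional values by a Fujimoto-type curvature argument; the three assertions then fall out of these. First note that the regularity condition $\psi_1\neq\bar\psi_2=\overline{f(\psi_1)}$ forces $\psi_1$ to omit every point of $E_f$, so, writing $q_1$ for the number of exceptional values of $\psi_1$, we always have $q_1\ge|E_f|$. Hence the inequality $q_1\le m-|E_f|+3$ gives $2|E_f|\le m+3$, and combined with $|E_f|\ge m-1$ it yields $2(m-1)\le m+3$, i.e.\ $m\le5$. So it suffices to prove (a) $|E_f|\ge m-1$ (in particular $E_f$ is finite for $m\ge2$), and (b) $q_1\le m-|E_f|+3$.

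For (a), observe that $E_f$ is exactly the fixed-point set of the antiholomorphic self-map $\Phi:=\overline{f(\,\cdot\,)}$ of $\bar{\mathbb C}\cong S^{2}$, equivalently the zero set of the harmonic rational function $F(w):=f(w)-\bar w$. Finiteness: if $f(z)=\bar z$ then $\widetilde f(f(z))=z$, where $\widetilde f(w):=\overline{f(\bar w)}$, so $E_f\subset\operatorname{Fix}(\widetilde f\circ f)$; since $\widetilde f\circ f$ has degree $m^{2}\ge4$ it is not the identity, hence $\operatorname{Fix}(\widetilde f\circ f)$, and thus $E_f$, is finite. For the bound, $\Phi$ has topological degree $-m$ as a self-map of $S^{2}$, so its Lefschetz number is $1-m$; since the fixed-point index of $\Phi$ at $p$ equals $-\operatorname{ind}_{p}(F)$, the Lefschetz formula reads $\sum_{p\in E_f}\operatorname{ind}_{p}(F)=m-1$ (indices computed in local coordinates). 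Finally, because the antiholomorphic part of $F$ is exactly the degree-one term $-\bar w$, the linear part of $F$ at a zero $p$ is $f'(p)\zeta-\bar\zeta$, which is nondegenerate of index $\operatorname{sgn}(|f'(p)|^{2}-1)=\pm1$ when $|f'(p)|\neq1$, while a direct winding-number computation gives index $0$ when $|f'(p)|=1$. So $\operatorname{ind}_{p}(F)\in\{-1,0,1\}$ for every zero, whence $|E_f|\ge\bigl|\sum_{p}\operatorname{ind}_{p}(F)\bigr|=m-1$.

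Step (b) is the crux. Passing to the universal cover of $M$ if necessary and using the correspondence of Section~\ref{subsubsec2}, we obtain a simply connected complete non-flat regular minimal surface $M^{*}\subset\mathbb R^{4}$, conformally $\mathbb C$ or the unit disc, with the same Gauss map $(\psi_1,\psi_2)=(\psi_1,f(\psi_1))$ and induced metric $ds^{*2}=2(1+|\psi_1|^{2})(1+|f(\psi_1)|^{2})|dh|^{2}$. Suppose, for contradiction, that $\psi_1$ omits $q_1\ge m-|E_f|+4$ distinct values $a_1,\dots,a_{q_1}$, among which lie the $|E_f|$ points of $E_f$. Following Fujimoto's method for Gauss maps of complete minimal surfaces, I would construct on $M^{*}$ an auxiliary conformal metric of the shape
\begin{equation*}
d\tau^{2}=C\,(1+|\psi_1|^{2})(1+|f(\psi_1)|^{2})\,|dh|^{2}\;\Bigl(\,\prod_{j=1}^{q_1}|\psi_1-a_j|\,\Bigr)^{-2\delta}\cdot(\text{correction factors}),
\end{equation*}
with an exponent $\delta>0$ and correction factors chosen so that $d\tau^{2}$ is smooth and positive across the poles of $\psi_1$ and of $f(\psi_1)$, dominates a constant multiple of $ds^{*2}$ near every end (so that completeness of $ds^{*2}$ makes $d\tau^{2}$ complete), and has Gaussian curvature $\le-1$. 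In this bookkeeping the degree $m$ enters through the growth of the non-holomorphic factor $1+|f(\psi_1)|^{2}$, the $|E_f|$ omitted values lying in $E_f$ (where $f(a_j)=\bar a_j$) must be handled separately, and the hypothesis $q_1\ge m-|E_f|+4$ is precisely what makes the exponent count close in the negative-curvature estimate. Since a complete conformal metric of curvature $\le-1$ exists neither on $\mathbb C$ nor, by the Ahlfors--Schwarz lemma, on the unit disc, this is the desired contradiction. (Via the exceptional-value theorem of \cite{Ou}, the case $q_1=q_2=4$ is in fact impossible here, since a Riemann--Hurwitz count shows it would force four totally ramified values of $f$, i.e.\ $4(m-1)\le2m-2$; so one may assume $\min\{q_1,q_2\}\le3$, although the curvature estimate is still needed to bound $q_1$ in general.)

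Combining (a) and (b) gives $m-1\le|E_f|\le q_1\le m-|E_f|+3$, so $|E_f|\le(m+3)/2$ and $m\le5$. The main obstacle is Step (b): arranging the exponents of the auxiliary metric so that the estimate closes at the sharp value $m-|E_f|+3$ (in particular controlling the non-holomorphic factor $1+|f(\psi_1)|^{2}$ and the distinguished behaviour at the points of $E_f$), rather than at a weaker value such as $m+3$.
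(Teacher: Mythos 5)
This Proposition is quoted from \cite{Ou}; the present paper contains no proof of it, so your proposal can only be measured against the techniques the paper actually displays (notably in Section~3, whose Main Theorem A with all $e_j=\infty$ is exactly the third bullet). Your reduction of all three bullets to (a) $|E_f|\ge m-1$ and (b) $q_1\le m-|E_f|+3$ is correct, and your Lefschetz argument for (a) is essentially sound: finiteness of $E_f$ via $E_f\subset\operatorname{Fix}(\tilde f\circ f)$, the index sum $\sum_p\operatorname{ind}_p(f-\bar w)=m-1$, and the bound $|\operatorname{ind}_p|\le 1$ together give $|E_f|\ge m-1$. One caveat: your claim that the index is $0$ whenever $|f'(p)|=1$ is false (for $f(w)=w+w^3$ the zero of $f(w)-\bar w$ at the origin has index $+1$); what is true, and all you need, is that the index lies in $\{-1,0,1\}$. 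This follows because, after normalizing $f'(p)=1$, one has $F(p+\zeta)=2i\,\mathrm{Im}\,\zeta+O(\zeta^2)$, so on a small circle the curve $\theta\mapsto F(p+re^{i\theta})$ meets the real axis exactly twice (once near $\theta=0$, once near $\theta=\pi$, at nonzero real values), forcing the winding number to be at most $1$ in absolute value.

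The genuine gap is step (b), which is the heart of the proposition and which you only sketch, explicitly deferring the decisive point (``arranging the exponents \dots\ so that the estimate closes at the sharp value''). Your auxiliary metric is built from the $\mathbb R^4$ minimal-surface metric $2(1+|\psi_1|^2)(1+|f(\psi_1)|^2)|dh|^2$; but this factor does not vanish at the points of $E_f$, so no choice of correction factors attached to it can see the ``distinguished behaviour'' there, and the Fujimoto exponent count can only close at a bound of the order $m+3$. The missing idea --- precisely the device used in Section~3 of this paper --- is to keep the genuine induced metric $ds^2=|f(\psi)-\bar\psi|^2|dh|^2$ and, after a M\"obius transformation placing $\infty\in E_f$, to invoke the pointwise bound
\begin{equation*}
|f(z)-\bar z|\le C\Bigl(\prod_{i=1}^{l}|z-c_i|\Bigr)\Bigl(\prod_{j=1}^{s}|z-b_j|^{-r_j}\Bigr)(1+|z|^2)^{\frac{m-l}{2}},
\end{equation*}
where $E_f=\{c_1,\dots,c_l,\infty\}$ and the $b_j$ are the poles of $f$ of orders $r_j$. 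Absorbing $\prod_j(\psi-b_j)^{-r_j}\,dh$ into a nowhere-zero holomorphic form $\omega$ yields the complete dominating metric $d\tilde s^2=\bigl(\prod_i|\psi-c_i|^2\bigr)(1+|\psi|^2)^{m-l}|\omega|^2$, in which the exponent of $(1+|\psi|^2)$ is $m-l=m-|E_f|+1$ rather than $m$; it is this reduction that makes the negative-curvature argument close at $q_1\le m-|E_f|+3$. Without it your construction does not prove the third bullet, and consequently neither $|E_f|\le\frac{m+3}{2}$ nor $m\le5$, since both are deduced from it.
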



\subsection{Metrics with negative curvature}

 Let $\Omega$  be a domain in $\mathbb{C}$ of hyperbolic type which can be endowed with a Poincar$\acute{e}$ metric  denoted by
	$$ds^2=\lambda_{\Omega}(z)^2|dz|^2, $$
	where $\lambda_{\Omega}(z)$ is a positive $C^2$-function on $\Omega$ satisfying the condition 
 $\Delta log\lambda_{\Omega}=\lambda_{\Omega}^2.$ In particular, for a disc $\mathbb{D}(R)=\{z\in \mathbb C,|z|<R\}$, we have 
	$$\lambda_{\mathbb{D}(R)}(z)=\frac{2R}{R^2-|z|^2}.$$
	\begin{thm}
		\cite{L.A.Ahlfors}\label{12}
		Let $\Omega $ be a domain in $\mathbb{C}$ and $\lambda$ be a positive $C^2$-function on $\Omega$ satisfying the condition $\Delta log\lambda\ge \lambda^2$. Then, for every holomorphic map $f:\mathbb{D}(R)\to \Omega,$
		\begin{equation}
		|f'(z)|\lambda(f(z))\le \frac{2R}{R^2-|z|^2}.
		\label{Schwz lem}
		\end{equation}
Especially, if $f(z)=z$, then we get
$$\lambda(z)\le\frac{2R}{R^2-|z|^2}.$$
	\end{thm}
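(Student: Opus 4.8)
The plan is to run the classical Ahlfors generalization of the Schwarz lemma: apply the maximum principle to the quotient of the pulled-back density $|f'|\,\lambda\!\circ\! f$ by the Poincar\'e density, on an exhaustion of $\mathbb D(R)$ by slightly smaller discs. First I would record the model computation: for $0<\rho\le R$ the density $\sigma_\rho(z):=\frac{2\rho}{\rho^2-|z|^2}$ on $\mathbb D(\rho)$ satisfies $\Delta\log\sigma_\rho=\sigma_\rho^2$ (equivalently, it has constant Gaussian curvature $-1$), which follows directly from $\Delta\log(\rho^2-|z|^2)=-4\rho^2/(\rho^2-|z|^2)^2$. Fix $\rho$ with $0<\rho<R$ and set, on $\mathbb D(\rho)$,
\[
v(z):=\frac{|f'(z)|\,\lambda(f(z))}{\sigma_\rho(z)}\ \ge 0 .
\]
Since $\overline{\mathbb D(\rho)}$ is a compact subset of $\mathbb D(R)$, the numerator $|f'|\,\lambda\!\circ\! f$ is bounded on $\mathbb D(\rho)$, while $\sigma_\rho(z)\to\infty$ as $|z|\to\rho$; hence $v$ extends continuously to $\overline{\mathbb D(\rho)}$ by the value $0$ on the boundary, and so attains its maximum at some point $z_0\in\mathbb D(\rho)$.

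If $v(z_0)=0$ then $v\equiv0$, so $f$ is constant and the inequality is trivial. Otherwise $v(z_0)>0$, so $f'(z_0)\neq0$ and, in a neighbourhood of $z_0$, the function
\[
\log v=\log|f'|+\log(\lambda\!\circ\! f)-\log\sigma_\rho
\]
is of class $C^2$ with a local maximum at $z_0$, whence $\Delta\log v(z_0)\le 0$. Now I would use three facts: $\log|f'|$ is harmonic near $z_0$ (as $\log|f'|=\mathrm{Re}\log f'$ with $f'$ holomorphic and non-vanishing there); the chain rule $\Delta(\varphi\!\circ\! f)=(\Delta\varphi)(f)\,|f'|^2$ for holomorphic $f$; and the two curvature identities $\Delta\log\lambda\ge\lambda^2$ (hypothesis) and $\Delta\log\sigma_\rho=\sigma_\rho^2$. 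Since $(\lambda\!\circ\! f)^2|f'|^2=v^2\sigma_\rho^2$, these give, near $z_0$,
\[
\Delta\log v\ \ge\ (\lambda\!\circ\! f)^2|f'|^2-\sigma_\rho^2\ =\ \sigma_\rho^2\,(v^2-1).
\]
Evaluating at $z_0$ and dividing by $\sigma_\rho(z_0)^2>0$ forces $v(z_0)^2\le1$, hence $v\le v(z_0)\le1$ throughout $\mathbb D(\rho)$, i.e. $|f'(z)|\,\lambda(f(z))\le\sigma_\rho(z)$ on $\mathbb D(\rho)$.

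Finally, fixing $z\in\mathbb D(R)$ and letting $\rho\uparrow R$ through values $>|z|$ gives $\sigma_\rho(z)\to\frac{2R}{R^2-|z|^2}$, so $|f'(z)|\,\lambda(f(z))\le\frac{2R}{R^2-|z|^2}$, which is \eqref{Schwz lem}; taking $f=\mathrm{id}$ (permissible when $\mathbb D(R)\subset\Omega$) specializes this to $\lambda(z)\le\frac{2R}{R^2-|z|^2}$. I do not anticipate a genuine obstacle here; the only points needing care are (a) the boundary behaviour of $v$, which is precisely why one argues on a smaller disc $\mathbb D(\rho)$ rather than on $\mathbb D(R)$ directly — on $\mathbb D(R)$ itself $|f'|$ and $\lambda\!\circ\! f$ need not be bounded up to the boundary, so $v$ need not tend to $0$ — and (b) the possible zeros of $f'$, where $\log v$ fails to be smooth; (b) is harmless because an interior maximum with positive value cannot occur at a zero of $f'$, and a zero maximum value makes the statement trivial.
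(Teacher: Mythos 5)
Your argument is correct and complete: the curvature computation $\Delta\log\sigma_\rho=\sigma_\rho^2$, the vanishing of $v$ on $\partial\mathbb D(\rho)$ forcing an interior maximum, the handling of the zeros of $f'$, and the limit $\rho\uparrow R$ are all in order, and you rightly flag that the specialization $f=\mathrm{id}$ presupposes $\mathbb D(R)\subset\Omega$. The paper gives no proof of this statement --- it is quoted directly from Ahlfors --- and what you have written is precisely the classical Ahlfors--Schwarz maximum-principle argument that the cited reference contains, so there is nothing to reconcile.
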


\section{Ramification problem }\label{sec3}

 Let $M$  be a  space-like stationary surfaces in $\mathbb R^{3,1}$ with Gauss map $G=(\psi_1,\psi_2)$, and the induced metric  $(\ref{phi5})$ satisfies 
 $$ds^2=2|\psi_1-\bar{\psi_2}|^2|dh|^2\le 2(1+|\psi_1|^2)(1+|\psi_2|^2)|dh|^2=d\Tilde{s}^2.$$
Then $(M, d\Tilde{s}^2)$ is a complete Riemann surface if $ds^2$ is complete. Thus the following ramification result for Gauss map of stationary surfaces in $\mathbb R^{3,1}$ can be derived from Theorem \uppercase\expandafter{\romannumeral3}  in \cite{MR0982173}   :

\begin{thm}\label{rami}

Let $M\subset \mathbb R^{3,1}$ is a non-flat complete space-like stationary surface with Gauss map $G=(\psi_1,\psi_2)$. Then
if $\psi_1,\psi_2$ are not constant, and $\psi_i(i=1,2)$ is ramified over $a^{ij}(1\le j\le q_i)$ with multiplicity at least $m_{ij}$, then
$\min\{\gamma_1, \gamma_2\}\le 3$, or $\gamma_1=\gamma_2=4$, where
    $$\gamma_1:=\Sigma_{j=1}^{q_1}(1-\frac{1}{m_{1j}}), \quad \gamma_2:=\Sigma_{j=1}^{q_2} (1-\frac{1}{m_{2j}}).$$
\end{thm}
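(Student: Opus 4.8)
The plan is to reduce everything to the known ramification theory for the generalized Gauss map of complete minimal surfaces in $\mathbb R^4$, via the correspondence $\mathbf x\leftrightarrow\mathbf x^*$ already established in Subsection~\ref{subsubsec2}. First I would pass to the universal cover $\tilde M$ of $M$; this is harmless because completeness is inherited, flatness is inherited, and the ramification hypotheses on $\psi_1,\psi_2$ lift. On $\tilde M$ the surface is simply connected, so by the discussion after equation~(\ref{var}) it corresponds to a simply-connected complete regular minimal surface $M^*$ in $\mathbb R^4$ with the \emph{same} Gauss map $(\psi_1,\psi_2)$ — here one uses that the metric comparison $ds^2\le d\tilde s^2$ in the displayed inequality at the start of Section~\ref{sec3} shows $d\tilde s^2=2(1+|\psi_1|^2)(1+|\psi_2|^2)|dh|^2$ is a complete metric on $M$ whenever $ds^2$ is, and $d\tilde s^2$ is (up to a constant factor) precisely the metric induced on the minimal surface $M^*\subset\mathbb R^4$ under the Weierstrass data $(\psi_1,\psi_2,dh)$.

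Second, I would invoke Theorem~III of Hoffman--Osserman (reference \cite{MR0982173}), which is the ramification theorem for the Gauss map of a complete minimal surface in $\mathbb R^4$ whose Gauss map, split into its two component maps $\psi_1,\psi_2:\tilde M\to\bar{\mathbb C}$, is non-degenerate in the sense that neither $\psi_i$ is constant. That theorem states exactly: if $\psi_i$ is ramified over $q_i$ points $a^{ij}$ with multiplicities at least $m_{ij}$, then setting $\gamma_i=\sum_{j=1}^{q_i}(1-1/m_{ij})$ one has $\min\{\gamma_1,\gamma_2\}\le 3$ or $\gamma_1=\gamma_2=4$. Since the Gauss map of $M^*$ coincides with $(\psi_1,\psi_2)$ and $M^*$ is complete, the conclusion transfers verbatim to $M$.

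The only genuine point requiring care — and the step I expect to be the main obstacle, though a minor one — is verifying that the non-flatness hypothesis on $M$ corresponds correctly to the non-degeneracy hypothesis needed to apply Theorem~III, and that ``flat'' on the $\mathbb R^{3,1}$ side is equivalent to ``flat'' on the $\mathbb R^4$ side. Concretely: $M$ being flat (Gauss curvature $K\equiv0$) would force the Gauss map image to be a point, hence both $\psi_1$ and $\psi_2$ constant; conversely, if $M$ is non-flat but one of $\psi_1,\psi_2$ happens to be constant, then $M$ falls into a degenerate class (it is then, after a M\"obius change as in Remark~\ref{mob}, essentially a minimal surface in $\mathbb R^3$ or a maximal surface in $\mathbb R^{2,1}$), and in those cases the statement is vacuous or reduces to the classical Fujimoto theorem applied to the single non-constant component. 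So in the statement as phrased, the hypothesis ``$\psi_1,\psi_2$ are not constant'' is exactly what pins us to the genuinely four-dimensional regime where Theorem~III applies directly. I would spell this dictionary out in one short paragraph and then simply cite \cite{MR0982173}.

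Finally, I would note that the argument does not even need the period conditions (3) of the Weierstrass representation, since after lifting to $\tilde M$ those are automatically satisfied; and that completeness of $d\tilde s^2$ is all one feeds into Theorem~III — the comparison $ds^2\le d\tilde s^2$ is used in exactly one direction, to guarantee that an originally complete stationary surface yields a complete minimal surface in $\mathbb R^4$. This makes the whole proof of Theorem~\ref{rami} essentially a two-line reduction, which is why the excerpt introduces it as something one ``can easily get.''
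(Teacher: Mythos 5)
Your proposal is correct and follows essentially the same route as the paper: the paper's entire argument is the metric comparison $ds^2=2|\psi_1-\bar{\psi_2}|^2|dh|^2\le 2(1+|\psi_1|^2)(1+|\psi_2|^2)|dh|^2=d\tilde{s}^2$, the observation that $d\tilde{s}^2$ is therefore complete and is precisely the metric of the associated minimal surface in $\mathbb R^4$ with the same Weierstrass data, and a direct citation of Theorem~III of \cite{MR0982173}. The only slip is your aside that flatness of $M$ would force both $\psi_1,\psi_2$ to be constant (the flat $2$-degenerate entire graphs have one non-constant component), but this is immaterial since the theorem already hypothesizes that neither $\psi_i$ is constant.
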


\begin{rem}
  If all $m_{ij}=\infty(i=1,2, j=1, \cdots, q_i)$, that is $a^{ij}$ are exceptional values of $\psi_i$. Then $\gamma_i=q_i (i=1,2)$ are the number of exceptional values, and the above theorem is a generalization of Theorem $1.2$ in \cite{Ou}.
\end{rem}

Next, we will discuss the ramification problem 
with  condition $\psi_2=f(\psi_1)$ where $f(z)=\frac{P(z)}{Q(z)}$  is a rational function of degree $m$. Throughout the proof process, we may assume $M$ is simply connected, otherwise we consider its universal covering space. By Koebe's uniformization  theorem, $M$ is conformally  equivalent to  the whole complex plane $\mathbb C$ or to the unit disc $\mathbb D$. For the case $M=\mathbb C$, $\gamma$ defined in (\ref{mr}) satisfies $\gamma\le 2$ by the following theorem: 

\begin{thm}\cite{MR1546112}\label{gpt}
Let $f$ be a non-constant meromorphic function over $\mathbb C$, if there are $q$ distinct points $a_1,a_2,\cdots, a_q \in \overline {\mathbb C}$ such that all the roots of the equation $f(z)=a_i$ have multiplicity at least $e_i$($e_i=\infty$ if $f(z)=a_i$ has no root). Then  
\begin{equation}
    \sum_{i=1}^q(1-\frac{1}{e_i})\le 2.
\end{equation}
\end{thm}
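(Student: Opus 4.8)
The plan is to derive this statement as a direct consequence of the two fundamental theorems of Nevanlinna value distribution theory. Recall that for a non-constant meromorphic function $f$ on $\mathbb C$ one has the Nevanlinna characteristic $T(r,f)$, the counting function $N(r,a)$ which counts the $a$-points of $f$ with multiplicity, and the truncated counting function $\overline N(r,a)$ which counts distinct $a$-points without multiplicity. Since $f$ is non-constant, $T(r,f)\to\infty$ as $r\to\infty$, and this growth is what ultimately forces the inequality.

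First I would invoke the Second Main Theorem: for the given distinct points $a_1,\dots,a_q\in\overline{\mathbb C}$,
$$(q-2)\,T(r,f)\le \sum_{i=1}^q \overline N(r,a_i)+S(r,f),$$
where the error term satisfies $S(r,f)=o(T(r,f))$ as $r\to\infty$, possibly outside an exceptional set of finite Lebesgue measure. Next I would convert the ramification hypothesis into an estimate on the truncated counting functions. If every root of $f(z)=a_i$ has multiplicity at least $e_i$, then each distinct $a_i$-point contributes at least $e_i$ to $N(r,a_i)$ but only $1$ to $\overline N(r,a_i)$, whence $\overline N(r,a_i)\le \tfrac{1}{e_i}N(r,a_i)$; in the degenerate case $e_i=\infty$ the equation $f=a_i$ has no root, so $\overline N(r,a_i)=0$, consistent with $1/e_i=0$. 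Applying the First Main Theorem $N(r,a_i)\le T(r,f)+O(1)$ then yields $\overline N(r,a_i)\le \tfrac{1}{e_i}T(r,f)+O(1)$.

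Substituting these bounds into the Second Main Theorem and collecting terms gives
$$\Big(q-2-\sum_{i=1}^q\frac{1}{e_i}\Big)T(r,f)\le S(r,f)+O(1).$$
Finally I would let $r\to\infty$ along values lying outside the exceptional set. Since $T(r,f)\to\infty$ while $S(r,f)=o(T(r,f))$ there, the coefficient on the left cannot be positive, for otherwise the left side would grow like a positive multiple of $T(r,f)$ while the right side is negligible compared to it. Hence $q-2-\sum_{i=1}^q 1/e_i\le 0$, which is exactly $\sum_{i=1}^q\bigl(1-\tfrac{1}{e_i}\bigr)\le 2$.

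The main obstacle, and indeed the only delicate point, is the control of the error term $S(r,f)$ and its exceptional set: one must guarantee the existence of a sequence $r_n\to\infty$ along which $S(r_n,f)/T(r_n,f)\to 0$. This is supplied by the standard lemma on the logarithmic derivative, which bounds $S(r,f)$ in terms of the logarithmic derivative and ensures the exceptional set has finite measure, so such a sequence always exists. Everything else is formal bookkeeping with the First and Second Main Theorems, and I would simply cite the Second Main Theorem in the form stated rather than reprove it.
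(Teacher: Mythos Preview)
Your argument is correct and is the standard derivation of this ramification inequality from Nevanlinna's First and Second Main Theorems. Note, however, that the paper does not give its own proof of this statement: Theorem~\ref{gpt} is simply quoted from \cite{MR1546112} and used as a black box, so there is no proof in the paper to compare against. Your write-up supplies exactly the classical justification one would give if asked to prove it.
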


\begin{pro}\label{special}
\begin{itemize}
 
 \item If  $M$ is 2-degenerate $(m=0)$ or  1-degenerate of hyperbolic type($m=1, |E_f|=2$), $M$ is an entire graph over $\mathbb R^2$ and the universal covering space $\Tilde{M}$ of $M$ is conformally equivalent to the whole complex plane. Then  $\gamma\le 2$ by Theorem \ref{gpt}.

\item If $M$ is 1-degenerate of elliptic type($m=1, |E_f|=0$), $M$ can be deformed by a minimal surface in $\mathbb R^3$, thus  $\gamma\le 4$ by Theorem \ref{r3}.
\end{itemize}
\end{pro}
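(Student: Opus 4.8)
The plan is to handle the three model cases separately, reducing in each the ramification count of $\psi_1$ to a theorem already available. Throughout one may assume $M$ is simply connected (pass to the universal cover, which changes neither $\psi_1$ nor the conclusion), and one uses repeatedly that $\psi_1$ omits every point of $E_f$, so that the quantity $\gamma$ in $(\ref{mr})$ is precisely the total ramification defect of $\psi_1$ over the $q+|E_f|$ \emph{distinct} points $E_f\cup\{a_1,\dots,a_q\}$; also, non-flatness of $M$ forces $\psi_1$ to be non-constant, since otherwise $\psi_2=f(\psi_1)$ would be constant too.

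\textbf{The cases $m=0$ and $(m,|E_f|)=(1,2)$.} For $m=0$ ($2$-degenerate), Theorem~4.3 of \cite{Ou} (recalled in Section~\ref{subsec4}) already gives that $M$ is an entire graph of $h\,\mathbf y_0$ over $\mathbb R^2$ with $K\equiv0$, hence conformally $\mathbb C$. For $(m,|E_f|)=(1,2)$ ($1$-degenerate of hyperbolic type), a conjugate-similarity normalization --- a congruence of $M$, inducing a M\"obius change of $\psi_1$ by Remark~\ref{mob} --- brings us to the model $\psi_2=M_S(\psi_1)=e^{2u}\psi_1$ with $e^{2u}\neq1$, so $E_f=\{0,\infty\}$; here I would establish $M\cong\mathbb C$ as follows. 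Since $\psi_1$ omits $0$ and $\infty$, it has no zeros and no poles, so (by the matching of the zeros of $dh$ with the poles of $\psi_1$ in the Weierstrass data) $dh$ has no zeros either, and $w:=\int\psi_1\,dh$ is a well-defined holomorphic function on the simply connected $M$ with $w'$ nowhere vanishing. From $(\ref{phi5})$ one computes $ds^2=2\,\bigl|1-e^{2u}\,\overline{\psi_1}/\psi_1\bigr|^2|dw|^2$, and since $|\overline{\psi_1}/\psi_1|\equiv1$ the coefficient stays between the positive constants $2(e^{2u}-1)^2$ and $2(e^{2u}+1)^2$; hence $(M,ds^2)$ is complete iff $(M,|dw|^2)$ is, $w$ is then a local isometry from a complete flat surface onto Euclidean $\mathbb C$, so $w$ is a covering map and --- $\mathbb C$ being simply connected --- a biholomorphism. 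In both of these cases $\psi_1$ is a non-constant meromorphic function on $\mathbb C$ that omits $E_f$ and is ramified over each $a_j$ with multiplicity $\ge e_j$, so Theorem~\ref{gpt}, applied to the $q+|E_f|$ distinct points $E_f\cup\{a_1,\dots,a_q\}$, gives $\gamma=|E_f|+\sum_{j=1}^q(1-1/e_j)\le2$.

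\textbf{The case $(m,|E_f|)=(1,0)$ (elliptic type).} Now $E_f=\emptyset$, so $\gamma=\sum_{j=1}^q(1-1/e_j)$ and the hypothesis $q>m-2|E_f|+3$ reads $q>4$. By Section~\ref{subsec4} and \cite{Ou}, after conjugate similarity $\psi_2=e^{-2i\alpha}/\psi_1$ with $\alpha\in(0,\pi/2]$, and such an $M$ is either itself a minimal surface in $\mathbb R^3$ (when $\alpha=\pi/2$) or is deformed through one; in either case \cite{Ou} provides a complete non-flat minimal surface $M_0\subset\mathbb R^3$ whose Gauss map equals $\psi_1$ up to a fixed M\"obius transformation. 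Since a M\"obius change of variable preserves ramification multiplicities, the Gauss map of $M_0$ is ramified over the corresponding $q>4$ distinct points with multiplicities $\ge e_j$, and Fujimoto's Theorem~\ref{r3} applied to $M_0$ yields $\gamma=\sum_{j=1}^q(1-1/e_j)\le4$.

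\textbf{Main difficulty.} The only step that is not a direct appeal to an earlier result is pinning down the conformal type of $M$ in the hyperbolic case: there $M$ is not flat in general, so one cannot argue ``flat, hence $\mathbb C$'' as in the $2$-degenerate case, and one must instead manufacture a global conformal coordinate out of the Weierstrass data --- which is exactly what the primitive $w=\int\psi_1\,dh$ provides once $\psi_1$ is known to omit $\{0,\infty\}$, the comparison $ds^2\asymp|dw|^2$ then upgrading ``locally biholomorphic'' to ``biholomorphic'' through completeness. The elliptic case, by contrast, is pure bookkeeping on top of the structure theory of \cite{Ou} and the M\"obius-invariance of the ramification data and of Theorem~\ref{r3}.
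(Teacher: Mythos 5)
Your proposal is correct and follows the same overall reduction as the paper: in the $2$-degenerate and hyperbolic cases one shows the universal cover is conformally $\mathbb C$ and applies Theorem \ref{gpt} to the $q+|E_f|$ distinct points $E_f\cup\{a_1,\dots,a_q\}$ (each omitted point of $E_f$ contributing a full defect $1$), while in the elliptic case one transfers the ramification data to the associated complete minimal surface in $\mathbb R^3$ and applies Theorem \ref{r3}. The one place you genuinely depart from the paper is the hyperbolic case: the paper obtains the parabolic conformal type by quoting from \cite{Ou} that such an $M$ is congruent to an entire space-like stationary graph over $\mathbb R^2$, whereas you manufacture the global coordinate directly from the Weierstrass data --- after normalizing $\psi_2=e^{2u}\psi_1$, the omission of $E_f=\{0,\infty\}$ makes $dw=\psi_1\,dh$ a nowhere-vanishing holomorphic $1$-form whose flat metric $|dw|^2$ is uniformly comparable to $ds^2$, so completeness makes $w$ a covering of $\mathbb C$ and hence a biholomorphism. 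This buys a self-contained proof of the conformal type that does not lean on the graph structure theorem, at the cost of a short computation; the paper's route is shorter but defers the substance to \cite{Ou}. Your added observations --- that non-flatness forces $\psi_1$ to be non-constant, and that the M\"obius change of the Gauss map in the elliptic deformation preserves ramification multiplicities --- are correct and fill in steps the paper leaves implicit.
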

Thus, we just need to consider the 1-degenerate of parabolic type ($m=1, 
|E_f|=1$) and the case $2 \le m \le 5$. 
 Since $E_f\ne\emptyset $, without loss of generality, we can assume $\infty\in E_f$ under a suitable M\"{o}bius transformation. This means $f(\infty)=\infty$, hence 
$$m=degree P>degree Q =n.$$
Set 
$$E_f=\{c_1,c_2,\cdots, c_l, \infty\}$$
and let $b_1,\cdots, b_s$ be the zeros of $Q(z)$ with orders $r_1,\cdots, r_s$ respectively. Then
$$|f(z)-\bar{z}|\le C\left(\Pi_{i=1}^l|z-c_i|\right)\left(\Pi_{j=1}^s|z-b_j|^{-r_j}\right)(1+|z|^2)^{\frac{m-l}{2}}.$$

Let $\psi:=\psi_1$, the induced metric on $M$ is

\begin{equation}
\begin{split}\label{metric}
    ds^2 &=|f(\psi)-\bar{\psi}|^2|dh|^2\\
         & \le \left(\Pi_{i=1}^l|\psi-c_i|^2\right)\left(\Pi_{j=1}^s|\psi-b_j|^{-2r_j}\right)(1+|\psi|^2)^{m-l}|dh|^2\\
         &=\left(\Pi_{i=1}^l|\psi-c_i|^2\right)(1+|\psi|^2)^{m-l}|\omega|^2=d\Tilde{s}^2,
  \end{split}
\end{equation}
 then $d\Tilde{s}^2$ is also a complete metric on $M$, where $$\omega=\frac{dh}{\Pi_{j=1}^s|\psi-b_j|^{r_j}}$$ is a holomorphic 1-form on $M$ with no zero.
 Since 
 $$m-1\le |E_f|=l+1\le \frac{m+3}{2},$$
 we have $0<m-l\le 2$.

For a non-zero meromorphic function $f$, the divisor $\nu_f$ of $f$ is defined as follows:
\begin{equation*}
    \nu_f(a)=\begin{cases}
    k & \text{ if $f$ has a zero of order $k$ at $a$, }  \\
    -p & \text{ if $f$ has a pole of order $p$ at $a$, }\\
    0 & \text{ otherwise.}\end{cases}
\end{equation*}



\begin{lem}
 Assume that there are $q$ distinct points $a_1,\cdots, a_q \in \bar{\mathbb C}\setminus E_f$, such that the meromorphic function $\psi$ on $\mathbb D(R)$ is ramified over $a_j$ with multiplicity at least $e_j$ for each j.  If $l+q-\sum_{j=1}^q\frac{1}{e_j}-1>\epsilon>(l+q)\epsilon'>0$, where $0<\epsilon'<1$.  Set 
 \begin{equation}
     v=\frac{(1+\psi^2)^{\frac{(m-l)p}{2}}\psi'}{\Pi_{i=1}^l(\psi-c_i)^{1-\epsilon'}\Pi_{j=1}^q(\psi-a_j)^{1-\frac 1 e_j-\epsilon'}}
 \end{equation}
on $\mathbb D(R)\setminus \{z, \psi(z)=a_j \text{ for some j}\}$  and $v=0$ on $\mathbb D(R)\cap \{z, \psi(z)=a_j \text{ for some j}\}$. Set $$p=\frac{l+q-\sum_{j=1}^q\frac 1 e_j-1-\epsilon}{m-l}.$$

Then $v$ is a continuous function in $\mathbb D(R)$, and there exists a positive constant $B$ such that \begin{equation}\label{neq}
    |v|\le B\frac{2R}{R^2-|z|^2}.
\end{equation}
\end{lem}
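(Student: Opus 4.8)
The strategy is to write $v=\psi'\,\Lambda(\psi)$ for an explicit (multivalued) function $\Lambda$ on $\bar{\mathbb C}$, to dominate $|\Lambda|$ by the Poincar\'e metric of a hyperbolic target domain, and then to apply the Ahlfors--Schwarz lemma (Theorem \ref{12}) to $\psi$. First I record a structural point that makes the continuity assertion go through: since $\infty\in E_f$ we have $\deg P>\deg Q$, and since $\omega=dh/\prod_j(\psi-b_j)^{r_j}$ is a holomorphic $1$-form with no zero, $\psi=\psi_1$ can have no pole on $M$. Indeed, a pole of order $\mu$ would, by positive-definiteness of $ds^2$, force $dh$ to vanish to order $\mu(m-n)$ there (as $\psi_2=f(\psi_1)$ has a pole of order $\mu(m-n)$), hence $\omega$ to vanish to order $\mu m$, a contradiction. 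Thus $\psi\colon\mathbb D(R)\to\mathbb C$ is holomorphic, and it omits every point of $E_f$, in particular all the $c_i$.

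For continuity, note that
\[
\Lambda(w):=\frac{(1+w^2)^{(m-l)p/2}}{\prod_{i=1}^l(w-c_i)^{1-\epsilon'}\prod_{j=1}^q(w-a_j)^{1-1/e_j-\epsilon'}},
\]
so that $|v|=|\psi'|\,|1+\psi^2|^{(m-l)p/2}\prod_i|\psi-c_i|^{-(1-\epsilon')}\prod_j|\psi-a_j|^{-(1-1/e_j-\epsilon')}$ is single valued (the fractional powers make $v$ itself only locally defined). Away from $A:=\{z\in\mathbb D(R):\psi(z)\in\{a_1,\dots,a_q\}\}$ this is continuous and nonnegative: the $|\psi-c_i|$ never vanish, and since $p>0$ (from $l+q-\sum 1/e_j-1>\epsilon$ and $0<m-l$), zeros of $\psi'$ or of $1+\psi^2$ only produce zeros of $|v|$ of positive order. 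Near $z_0\in A$ with $\psi(z_0)=a_j$, the ramification hypothesis gives $\mathrm{ord}_{z_0}(\psi-a_j)=k\ge e_j$ and $\mathrm{ord}_{z_0}\psi'=k-1$; substituting, $|v|$ vanishes to order $(k/e_j-1)+k\epsilon'>0$ (with extra vanishing if $a_j=\pm i$). Hence $|v|$ extends continuously across $A$ by $0$, which is exactly the assigned value, so $v$ is continuous on $\mathbb D(R)$; when $e_j=\infty$, $\psi$ omits $a_j$ and such $z_0$ do not occur.

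For the bound, set $\Omega:=\mathbb C\setminus\{c_1,\dots,c_l,a_1,\dots,a_q\}$ (the $a_j$ are finite, as $a_j\notin E_f\ni\infty$). The hypothesis $l+q-\sum 1/e_j-1>0$ forces $l+q\ge2$, so $\Omega$ is hyperbolic; let $\lambda_\Omega$ be its Poincar\'e metric, with $\Delta\log\lambda_\Omega=\lambda_\Omega^2$. The key estimate is $|\Lambda|\le B\lambda_\Omega$ on $\Omega$ for a suitable $B>0$. Indeed, near each $c_i$ and each $a_j$ the Poincar\'e density has a cusp singularity $\asymp |w-\cdot|^{-1}(\log)^{-1}$, whereas $|\Lambda|$ blows up only like $|w-c_i|^{-(1-\epsilon')}$, resp.\ $|w-a_j|^{-(1-1/e_j-\epsilon')}$, so $|\Lambda|/\lambda_\Omega\to0$ there; as $w\to\infty$, the definition of $p$ gives $|\Lambda|\asymp|w|^{(m-l)p-(l+q)+\sum 1/e_j+(l+q)\epsilon'}=|w|^{-1-(\epsilon-(l+q)\epsilon')}$, which, since $\epsilon>(l+q)\epsilon'$, decays strictly faster than $\lambda_\Omega\asymp|w|^{-1}(\log)^{-1}$, so again $|\Lambda|/\lambda_\Omega\to0$; and at $\pm i$ (if in $\Omega$) $|\Lambda|$ vanishes while $\lambda_\Omega>0$. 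Hence $|\Lambda|/\lambda_\Omega$ extends continuously by $0$ at all these points and is therefore bounded on $\Omega$.

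Finally, on $\mathbb D(R)\setminus A$ the holomorphic map $\psi$ takes values in $\Omega$, so Theorem \ref{12}, applied to the pseudometric $\psi^\ast(\lambda_\Omega^2|dw|^2)$, gives $|\psi'(z)|\lambda_\Omega(\psi(z))\le 2R/(R^2-|z|^2)$; combined with $|\Lambda|\le B\lambda_\Omega$ this yields $|v(z)|=|\psi'(z)|\,|\Lambda(\psi(z))|\le B\,\frac{2R}{R^2-|z|^2}$ on $\mathbb D(R)\setminus A$, hence on all of $\mathbb D(R)$ by continuity of $v$. I expect the main obstacle to be exactly this last invocation: Theorem \ref{12} as stated requires a holomorphic map defined on all of $\mathbb D(R)$ with image in the hyperbolic domain, whereas $\psi$ meets the punctures $a_j$; the pulled-back pseudometric $\psi^\ast(\lambda_\Omega^2|dw|^2)$ extends across the discrete set $A$ only with cusp/conical singularities of angle $\ge2\pi$ (which only improve the curvature estimate), so one must either appeal to a form of the Schwarz lemma admitting such singularities or exhaust $\mathbb D(R)\setminus A$ and pass to the limit. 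The growth bookkeeping behind $|\Lambda|\le B\lambda_\Omega$ — in particular making the exponent at $\infty$ come out strictly below $-1$ — is where the precise choices of $\epsilon,\epsilon',p$ are used, but it is routine once the comparison is in place.
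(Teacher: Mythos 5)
Your argument is, step for step, the paper's own proof: continuity of $v$ via the vanishing order $r_0/e_j-1+r_0\epsilon'>0$ at points of $\psi^{-1}(a_j)$; the choice $\Omega=\mathbb C\setminus\{a_1,\dots,a_q,c_1,\dots,c_l\}$; the cusp asymptotics of $\lambda_\Omega$ at the punctures and at $\infty$; the boundedness of $u=|\Lambda(\psi)|/\lambda_\Omega(\psi)$ (your exponent count at $\infty$, giving decay like $|w|^{-1-(\epsilon-(l+q)\epsilon')}$ against $\lambda_\Omega\asymp(|w|\log|w|)^{-1}$, is exactly the ``direct calculation'' the paper leaves implicit); and finally $v=u\,|\psi'|\lambda_\Omega(\psi)\le B\cdot 2R/(R^2-|z|^2)$ via Theorem \ref{12}. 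Your preliminary observation that $\psi$ has no poles is correct and worth making explicit, though the shorter reason is that $\psi$ omits $E_f\ni\infty$.

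The obstacle you flag at the end is genuine, and it is present in the paper's proof as well: inequality (\ref{psilambda_}) is obtained by applying Theorem \ref{12} to $\psi$, which requires $\psi(\mathbb D(R))\subset\Omega$, whereas the whole point of finite $e_j$ is that $\psi$ may attain $a_j$. But neither of your proposed remedies closes the gap. Near a point $z_0\in\psi^{-1}(a_j)$ the pulled-back density $|\psi'|\lambda_\Omega(\psi)$ blows up like $\left(|z-z_0|\log(1/|z-z_0|)\right)^{-1}$ (try $\psi(z)=z^k$, $a_j=0$), so the pointwise bound by $2R/(R^2-|z|^2)$ is simply false there: this is a cusp at which the density tends to $+\infty$, not a conical singularity of angle $\ge 2\pi$, and exhausting $\mathbb D(R)\setminus A$ only compares $\psi^*\lambda_\Omega$ with $\lambda_{\mathbb D(R)\setminus A}$, which dominates $\lambda_{\mathbb D(R)}$ rather than being dominated by it. Only the product $v=u\cdot|\psi'|\lambda_\Omega(\psi)$ obeys the desired bound, because $u$ vanishes on $A$ fast enough; yet $\log v$ is harmonic off its zeros, so no curvature-based Schwarz lemma applies to $v$ directly either. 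The standard repair (Fujimoto, Ru) is to insert logarithmic factors $\prod_j\log\left(a_0/|\psi,a_j|^2\right)$ into the test metric so that it acquires strictly negative curvature and then to invoke the generalized Schwarz lemma for continuous pseudometrics; alternatively, one can note that in the only place the lemma is used (the proof of Main Theorem A) the relevant map is $\psi\circ H$ with $H$ valued in $\mathbb D\setminus\{\psi'=0\}$, so after discarding the harmless points with $e_j=1$ it genuinely omits every $a_j$ and Theorem \ref{12} applies as stated.
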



   

\begin{proof}
First, we prove the continuity of $v$. Obviously, $v$ is continuous on 
$$\mathbb D(R)\setminus \{z, \psi(z)=a_j \text{ for some } 0\le j\le q\}.$$
Take a point $z_0$ with $\psi(z_0)=a_j$ for some $j$. 
Now, we write
$$\psi(z)-a_j=(z-z_0)^{r_0}h(z), $$
where $r_0=\nu_\psi(z_0)\ge e_j$ and $h(z)$ is a meromorphic function with $h(z_0)\ne 0$. Then 
\begin{equation}\label{order}
    \psi'=(z-z_0)^{r_0-1}(r_0h(z)+(z-z_0)h'(z)),
\end{equation}
that is $\nu_{\psi'}(z_0)=\nu_\psi(z_0)-1=r_0-1$.
Since 
\begin{align*}   
\nu_{v}(z_0)&=\nu_{\psi'}(z_0)-(1-\frac 1 e_j-\epsilon')\nu_\psi(z_0)\\
&=r_0-1-r_0(1-\frac 1 e_i-\epsilon')\\
&=\frac{r_0}{e_i}-1+r_0\epsilon'>0,
\end{align*}
then $\lim_{z\to z_0}v=0$. This implies that $v$ is continuous on $\mathbb{D}(R)$. 


Obviously, (\ref{neq}) holds if $z\in \mathbb D(R)\cap \{z, \psi(z)=a_j \text{ for some j}\}$. 
 If $z\in \mathbb D(R)\setminus \{z, \psi(z)=a_j \text{ for some j}\}$,
 let $\Omega=\mathbb{C}\setminus \{a_1,\dots , a_q, c_1,\dots, c_l\}$ and $\lambda_{\Omega}$ be the Poincar$\acute{e}$ metric of $\Omega$, that is $\lambda_{\Omega}$ satisfies $\Delta log\lambda_{\Omega}=\lambda_{\Omega}^2,$  then (\ref{Schwz lem})  implies that 
		\begin{equation}
		|\psi'(z)|\lambda_{\Omega}(\psi(z))\le \frac{2R}{R^2-|z|^2}.
		\label{psilambda_}
		\end{equation}
  Moreover, $\lambda_{\Omega}$ satisfies(see e.g. p.250 of \cite{Nevanlinna})
  $$\lambda_{\Omega}(\psi)\sim \frac{1}{|\psi-a_i|\log|\psi-a_i|^{-1}}\quad \text{ near } a_i,$$
  $$\lambda_{\Omega}(\psi)\sim \frac{1}{|\psi-c_j|\log|\psi-c_j|^{-1}}\quad \text{ near } c_j,$$
  $$\lambda_{\Omega}(\psi)\sim \frac{1}{|\psi|\log|\psi|}\quad \text{ near }  \infty .$$
  Denote 
  $$u=\frac{(1+|\psi|^2)^{\frac{(m-l)p}{2}}}{\Pi_{i=1}^l|\psi-c_i|^{1-\epsilon'}\Pi_{j=1}^q|\psi-a_j|^{1-\frac 1 e_j-\epsilon'}\lambda_{\Omega}(\psi)},$$
then by a direct calculation, we get $u$ is bounded by a constant $B$ .

Thus by Theorem \ref{12}, we get $$v=u|\psi'|\lambda_{\Omega}(\psi)\le B\frac{2R}{R^2-|z|^2}.$$
\end{proof}

Now, we begin to  proof the Main Theorem A.
\begin{proof}
 If the equation (\ref{mr}) is wrong, then 
 $$l+1+\sum_{j=1}^q(1-\frac{1}{e_j})>m-l+2,$$
 that is 
\begin{equation}\label{wr}
  2l+q-\sum_{j=1}^q\frac{1}{e_j} -m-1>0.  
 \end{equation}
 Take $\epsilon'$ with 
 \begin{equation*}
   \frac{2l+q-\sum_{j=1}^q\frac{1}{e_j} -m-1}{l+q}>\epsilon'>\frac{2l+q-\sum_{j=1}^q\frac{1}{e_j} -m-1}{q+m},
 \end{equation*}
and set 
$$\Tilde{p}=\frac{1}{p}=\frac{m-l}{l+q-\sum_{j=1}^q\frac{1}{e_j}-1-\epsilon}.$$
Then 
\begin{equation*}
0<\Tilde{p}=\frac{m-l}{l+q-\sum_{j=1}^q\frac 1 {e_j}-1-\epsilon}<\frac{m-l}{m-l+2-\epsilon}<1,
\end{equation*}
 and
\begin{align*}  
\frac{\Tilde{p}}{1-\Tilde{p}}>\frac{\epsilon'\Tilde{p}}{1-\Tilde{p}}&=\frac{(m-l)\epsilon'}{l+q-\sum_{j=1}^q\frac{1}{e_j}-1-\epsilon-m+l}\\
&=\frac{(m-l)\epsilon'}{2l+q-\sum_{j=1}^q\frac{1}{e_j}-1-m-\epsilon}\\
&>\frac{(m-l)\epsilon'}{(m+q)\epsilon'-\epsilon}\\
&=\frac{(m-l)\epsilon'}{(m-l)\epsilon'+(l+q)\epsilon'-\epsilon}>1.
\end{align*}


Now, we consider $M=\mathbb D$ and $$E=\{z\in \mathbb D, \psi'(z)=0\}.$$
Non-flatness of $M$ implies $E\ne \mathbb D$. 

Let $\omega=gdz$, where $g$ is a holomorphic function on $\mathbb D$ with no zero, and consider the following many-valued function:

$$\eta=\frac{g^{\frac{1}{1-\Tilde{p}}}\Pi_{i=1}^l(\psi-c_i)^{\frac{\Tilde{p}(1+p-\epsilon')}{1-\Tilde{p}}}\Pi_{j=1}^q\left(\psi-a_j\right)^{\frac{\Tilde{p}(1-\frac{1}{e_j}-\epsilon')}{1-\Tilde{p}}}}{\psi'^{\frac{\Tilde{p}}{1-\Tilde{p}}}}$$
on $\mathbb D'=\mathbb D\setminus E$.


Take an arbitrary single-value branch of $\eta$, still denoted by $\eta$ for the matter of convenience.
Let $$w=F(z)=\int\eta dz$$
be a holomorphic mapping from $\mathbb D'$ to $\mathbb C$, satisfying $F(0)=0, F'(z)=\eta(z)\ne 0$. Hence there exists a holomorphic inverse mapping $z=H(w)$ on a neighborhood of $0$. Let $\mathbb{D}(R)=\{w:|w|\le R\}$ is the largest ball that $H$ can be defined, then $R<+\infty$ (otherwise, $H$ is a non-constant bounded entire function, which contracts to the Liouville theorem) and
		there exists a point $a$ on the boundary of $\mathbb{D}(R)$, such that $H$ cannot be extended beyond a neighborhood of $a$. Let
		$l_a:=\{ta:0\leq t< 1\}$ be the straight line segment starting from $0$, then $H(l_a)$
		must be a divergent curve in $\mathbb{D}'$ as $t$ tends to $1$. 
  \begin{lem}
There exists a point with $|a|=R$ such that $H(l_a)$ is a divergent curve in $\mathbb D$.   
\end{lem}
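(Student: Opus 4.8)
The plan is to argue by contradiction with the already-constructed objects: we have the holomorphic map $F=\int\eta\,dz:\mathbb D'\to\mathbb C$, its local inverse $H$, the maximal radius $R<+\infty$, and a boundary point $a$ with $|a|=R$ such that $H$ cannot be extended past a neighborhood of $a$; consequently the image curve $H(l_a)$ is a divergent curve in $\mathbb D'=\mathbb D\setminus E$ as $t\to1$. The only thing that needs to be ruled out is that such a divergent curve in $\mathbb D'$ could fail to be divergent in $\mathbb D$ itself, i.e. that $H(l_a)$ accumulates at an interior point $z_0\in E$ (a zero of $\psi'$) rather than running out to $\partial\mathbb D$. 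So the whole lemma reduces to showing: if for \emph{every} boundary point $a$ with $|a|=R$ at which $H$ fails to extend, the curve $H(l_a)$ accumulates at some point of $E$, we reach a contradiction with the maximality of $R$ (or with non-flatness).

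First I would recall why $H(l_a)$ must at least be divergent in $\mathbb D'$: if it stayed in a compact subset of $\mathbb D'$ then $\eta=F'$ would be bounded below away from zero along it, $H'=1/\eta\circ H$ would be bounded, and $H$ would extend holomorphically past $a$, contradicting the choice of $a$. Next, suppose toward a contradiction that $H(l_a)$ accumulates at an interior point $z_0\in\mathbb D$, necessarily with $\psi'(z_0)=0$ (otherwise $z_0\in\mathbb D'$ and the previous boundedness argument again applies, since $\eta\ne0$ near $z_0$). The key local computation is to examine the order of vanishing / blow-up of $\eta$ near $z_0$. Writing $\psi(z)-\psi(z_0)=(z-z_0)^{k}h(z)$ with $k=\nu_{\psi'}(z_0)+1\ge2$ and $h(z_0)\ne0$ (and treating the cases $\psi(z_0)=\infty$, $\psi(z_0)=a_j$, $\psi(z_0)=c_i$ separately, using that $g$ has no zeros and $\omega$ has no zeros), one finds $\nu_\eta(z_0)=-k\frac{\Tilde p}{1-\Tilde p}+(\text{nonnegative contributions from the }(\psi-c_i),(\psi-a_j)\text{ factors})$. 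Since $\frac{\Tilde p}{1-\Tilde p}>1$ was established just above in the proof, the dominant term makes $\nu_\eta(z_0)\le -k\frac{\Tilde p}{1-\Tilde p}<-1$ whenever $\psi(z_0)\notin\{a_1,\dots,a_q\}$; and when $\psi(z_0)=a_j$ the ramification hypothesis $k\ge e_j$ together with the computation $\nu_v(z_0)=\frac{r_0}{e_j}-1+r_0\epsilon'>0$ from the previous lemma shows the $(\psi-a_j)$-factor cannot fully compensate, so again $\nu_\eta(z_0)<-1$. Hence $\eta$ has a pole of order $>1$ at $z_0$, so $\int_{l_a}|\eta|\,|dz|=\int_0^1|\eta(H(ta))|\,|H'(ta)|\,|a|\,dt$ — wait, better: along $l_a$ the arclength of $H(l_a)$ is $\int|H'(w)|\,|dw|=\int|\eta(H(w))|^{-1}|dw|$, which near $z_0$ behaves like $\int |z-z_0|^{k\Tilde p/(1-\Tilde p)}\,\cdot(\dots)$, and since $k\Tilde p/(1-\Tilde p)>1$ this integral is \emph{finite}. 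Therefore $H(l_a)$ has finite length, so it cannot accumulate at a single point $z_0$ while also being a divergent curve in $\mathbb D'$ — a curve of finite length converging to $z_0$ would force $H$, and hence $F$, to extend across, contradicting the non-extendability at $a$ (here one uses that $F$ maps a neighborhood of $z_0$ biholomorphically onto a neighborhood of $F(z_0)=a$, since $\eta$ is a single-valued holomorphic function with an isolated pole behaves like... — more carefully: $1/\eta$ extends holomorphically across $z_0$ with value $0$, so $F$ extends, contradiction).

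Thus no such interior accumulation point can occur, and $H(l_a)$ must accumulate only on $\partial\mathbb D$, i.e. it is a divergent curve in $\mathbb D$; choosing this $a$ proves the lemma. I expect the main obstacle to be the careful bookkeeping of the exponents in the local analysis at $z_0$: one must simultaneously handle the four possibilities ($z_0$ a generic zero of $\psi'$; $\psi(z_0)$ equal to some $c_i$; equal to some $a_j$, where the ramification bound $k\ge e_j$ is used; or $\psi(z_0)=\infty$, where the factor $(1+\psi^2)$ and the orders $r_j$ of the $b_j$ enter through $\omega$ and $g$), and verify in each case that the net exponent of $(z-z_0)$ in $1/\eta$ strictly exceeds $1$, so that $H(l_a)$ has finite length near $z_0$. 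The inequality $\Tilde p/(1-\Tilde p)>1$ proved a few lines earlier is exactly what is needed to drive this, so the estimate is not delicate once the cases are organized; the real work is organizing them.
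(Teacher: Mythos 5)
Your overall skeleton matches the paper's: assume $H(l_a)$ accumulates at a point $z_0\in E=\{\psi'=0\}$, and compute $\nu_\eta(z_0)$ case by case, using the ramification bound $\nu_\psi(z_0)\ge e_j$ when $\psi(z_0)=a_j$ and the inequality $\frac{\Tilde{p}}{1-\Tilde{p}}>1$ to conclude $\nu_\eta(z_0)<-1$ in every case. (Two of the cases you list, $\psi(z_0)=c_i$ and $\psi(z_0)=\infty$, cannot actually occur: $\psi$ omits every point of $E_f=\{c_1,\dots,c_l,\infty\}$, so the bookkeeping is lighter than you fear.) Up to this point the proposal agrees with the paper.

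The gap is in how you extract a contradiction from $\nu_\eta(z_0)<-1$. The paper's mechanism is the identity $R=\int_{l_a}|dw|=\int_{H(l_a)}\left|\frac{dw}{dz}\right||dz|=\int_{H(l_a)}|\eta|\,|dz|$, together with the fact that $|\eta|\ge c|z-z_0|^{\nu}$ with $\nu<-1$ forces this integral to be $+\infty$ along any curve terminating at $z_0$, contradicting $R<+\infty$. You write down essentially this integral, then abandon it (``wait, better'') in favor of showing that the Euclidean arclength $\int_{l_a}|H'(w)|\,|dw|$ of $H(l_a)$ is finite. That is true (since $|H'|=|1/\eta\circ H|\to 0$), but it is not by itself a contradiction, and the contradiction you then claim is backwards: the vanishing of $1/\eta$ at $z_0$ does \emph{not} make $F=\int\eta\,dz$ extend across $z_0$ — on the contrary, $F'=\eta$ blows up at $z_0$ to order greater than $1$ (and in general with a branch point, since the exponents are not integers), so $F$ is unbounded near $z_0$, is not a local biholomorphism there, and cannot be inverted to continue $H$ past $a$. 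Hence no contradiction with the non-extendability of $H$ at $a$ is obtained along your route. The step you discarded is exactly the one that finishes the proof: $F\circ H=\mathrm{id}$ forces $\int_{H(l_a)}|\eta|\,|dz|=R<\infty$, while the pole of order $>1$ at the limit point forces it to be infinite.
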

\begin{proof}
If $H(l_a)$ tends to a point $z_0$ such that $\psi'(z_0)=a_j$ for some $0\le j\le q$, then we have two cases:
  
  \textbf{Case 1: }$\psi(z_0)\ne a_j$ for all $ 1\le j\le q$,  then $\nu_{\eta}(z_0)=-\frac{k\Tilde{p}}{1-\Tilde{p}}$.
 Since 
 $$\frac{k\Tilde{p}}{1-\Tilde{p}}>\frac{\Tilde{p}}{1-\Tilde{p}}>1,$$ then 
\begin{align*}
   R&=\int_{l_a}|dw|=\int_{H(l_a)}\left|\frac{dw}{dz}\right||dz|\\
   &=\int_{H(l_a)}|\eta||dz|=\infty,
\end{align*}
which contradicts with $R<\infty$.

\textbf{Case 2: }$\psi(z_0)= a_j$ for some $j\in \{1,2,\cdots,q\}$ . Then $\nu_{\psi}(z_0)\ge e_j$, and $\nu_{\psi'}(z_0)=\nu_{\psi}(z_0)-1$.  
\begin{align*}
    \nu_{\eta}(z_0)
    &=\frac{\Tilde{p}}{1-\Tilde{p}}(\nu_{\psi}(z_0)(1-\frac{1}{e_j}-\epsilon')-\nu_{\psi}(z_0)+1)\\
    &=\frac{\Tilde{p}}{1-\Tilde{p}}(1-\nu_{\psi}(z_0)(\frac{1}{e_j}+\epsilon'))\\
    &\le \frac{\Tilde{p}}{1-\Tilde{p}}(1-e_j(\frac{1}{e_j}+\epsilon'))\\
    &=-\frac{e_j\epsilon'\Tilde{p}}{1-\Tilde{p}}<-1
\end{align*}
then $\int_{H(l_a)} |\eta||dz|=+\infty$ and forces a contradiction.

\end{proof}

  Observing that
  
  
  $$\frac{dw}{dz}=\frac{g
\Pi_{i=1}^l(\psi-c_i)^{\Tilde{p}(1+p-\epsilon')}
\Pi_{j=1}^{q}\left({\psi-a_j}\right)^
{\Tilde{p}(1-\frac{1}{e_j}-\epsilon')}}
{(\psi')^{\Tilde{p}}}
 \left(\frac{dw}{dz}\right)^{\Tilde{p}},$$
then


$$\left|\frac{dz}{dw}\right|^2=\frac{|(\psi\circ H)'|^{2\Tilde{p}}\left|\frac{dz}{dw}\right|^{2\Tilde{p}}}{|g\circ H|^2\Pi_{i=1}^l|\psi\circ H-c_i|^{2\Tilde{p}(1+p-\epsilon')}
\Pi_{j=1}^{q}|\psi\circ H-a_j|^
{2\Tilde{p}(1-\frac{1}{e_j}-\epsilon')}},$$
and the pull back metric on $\mathbb D (R)$ is 

\begin{align*}
    H^*d\Tilde{s}^2 &= \bigg ( \left(\Pi_{i=1}^l|\psi-c_i|^2\right)(1+|\psi|^2)^{m-l}|g|^2\bigg)\circ H \left|\frac{dz}{dw}\right|^2|dw|^2\\
    &=\frac{\left(\Pi_{i=1}^l|\psi(w)-c_i|^2\right)(1+|\psi(w)|^2)^{m-l}|g(w)|^2|\psi'(w)|^{2\Tilde{p}}}{|g(w)|^2\Pi_{i=1}^l|\psi(w)-c_i|^{2\Tilde{p}(1+p-\epsilon')}\Pi_{j=1}^{q}|\psi(w)-a_j|^{2\Tilde{p}(1-\frac{1}{e_j}-\epsilon')}}|dw|^2\\
&=\left(\frac{(1+|\psi|^2)^{\frac{(m-l)p}{2}}\psi'}{\Pi_{i=1}^{l}(\psi-c_i)^{1-\epsilon'}\Pi_{j=1}^{q}(\psi-a_j)^{1-\frac 1 e_j-\epsilon'}}\right)^{2\Tilde{p}}|dw|^2\\
&\le \left(B\frac{2R}{R^2-|w|^2}\right)^{2\Tilde{p}} .
\end{align*}
Thus

\begin{equation}\label{int}
    \int_{H(l_a)} d\Tilde{s}=\int_{l_a}H^*d\Tilde{s}\le\int_0^R\left(\frac{2R}{R^2-|r|^2}\right)^{\Tilde{p}} dr . 
\end{equation}

The integral (\ref{int}) is convergent since $0<\Tilde{p}<1$, which contradicts to the completeness of $(M, d\Tilde{s}^2 )$.    
\end{proof}

\begin{rem}
 In equation $(\ref{mr})$, 
 \begin{itemize}
    \item If $m=0, |E_f|=1$($M$ is 2-degenerate), then $\gamma \le 2$; 
    \item If $m=1, |E_f|=2$($M$ is 1-degenerate of hyperbolic type), then $\gamma \le 2$;
    \item If $m=1, |E_f|=0$($M$ is 1-degenerate of elliptic type), then $\gamma \le 4$. 
 \end{itemize}
 These conclusions are agree with the Proposition \ref{special}.
\end{rem}
Thus, by combining the above proof process and Proposition \ref{special}, we get  Main Theorem A. 

\section{Unicity problem}
\label{sec4}
 
 Similarly as in Section \ref{sec3}, we first consider  Riemann surface $M$ and  metric 
 $$d\Tilde{s}^2=2(1+|\psi_1|^2)(1+|\psi_2|^2)|dh|^2.$$
 Thus we can easily get the following theorem by Theorem $1.2$ in \cite{MR3784760}. 
 
 \begin{thm}\label{unicity}
 Let $M$ and $\hat{M}$ be two complete non-flat space-like stationary surfaces in $\mathbb R^{3,1}$ with Gauss maps $G=(\psi_1,\psi_2)$ and $\hat{G}=(\hat{\psi}_1, \hat{\psi}_2)$ respectively.  Assume  $\Phi: M\to \hat{M}$ is  a conformal diffeomorphism 
     and $\psi_1,\psi_2, \hat{\psi}_1, \hat{\psi}_2$ are not constant. If for each $i(i=1,2)$, $\psi_i$ and $\hat{\psi}_i$ share $p_i>4$ distinct values and  $\psi_i\ne \hat{\psi}_i\circ\Phi $, then we have $\min\{p_1, p_2\}\le 6.$
In particular, if $p_1\ge 7$ and $p_2\ge 7$, then either $\psi_1\equiv \hat{\psi}_1\circ\Phi $ or $\psi_2\equiv \hat{\psi}_2\circ\Phi $, or both hold.

\begin{rem}
    Since every minimal surface in $\mathbb R^3$ is also a space-like stationary surface in $\mathbb R^{3,1}$ with Gauss map $G=(\psi_1,\psi_2)$ and $\psi_2=-\frac{1}{\psi_1}$, then theorem \ref{uni} and the optimality in $\mathbb R^3$ tell us the above result is also optimal.
\end{rem}

\end{thm}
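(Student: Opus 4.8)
The plan is to exploit the correspondence of Section~\ref{subsubsec2} between complete space-like stationary surfaces in $\mathbb R^{3,1}$ and complete regular minimal surfaces in $\mathbb R^4$, thereby reducing the assertion to the known unicity theorem, Theorem~1.2 of~\cite{MR3784760}, for the two Gauss-map components of minimal surfaces in $\mathbb R^4$. This mirrors exactly the way Theorem~\ref{rami} was obtained from Theorem~III of~\cite{MR0982173}: the induced metric $ds^2=2|\psi_1-\bar{\psi_2}|^2|dh|^2$ is dominated by $d\Tilde s^2=2(1+|\psi_1|^2)(1+|\psi_2|^2)|dh|^2$, so that completeness of $ds^2$ forces completeness of $d\Tilde s^2$, and the latter metric is precisely the one carried by the associated $\mathbb R^4$ minimal surface.

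First I would reduce to the simply-connected case by passing to universal covers $\pi:\Tilde M\to M$ and $\hat\pi:\Tilde{\hat M}\to\hat M$, since the construction $\mathbf x\leftrightarrow\mathbf x^*$ of Section~\ref{subsubsec2} is stated for simply-connected surfaces. The conformal diffeomorphism $\Phi$ lifts to a conformal diffeomorphism $\Tilde\Phi:\Tilde M\to\Tilde{\hat M}$, completeness and non-flatness are preserved under covering, the Gauss-map components pull back to $\psi_i\circ\pi$ and $\hat\psi_i\circ\hat\pi$, and the sharing relations $\psi_i^{-1}(a)=(\hat\psi_i\circ\Phi)^{-1}(a)$ lift verbatim. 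Applying $\mathbf x\leftrightarrow\mathbf x^*$ to $\Tilde M$ (and $\Tilde{\hat M}$) then produces a simply-connected complete regular minimal surface $M^*$ (resp. $\hat M^*$) in $\mathbb R^4$ carrying the metric $d\Tilde s^2$ and having the \emph{same} generalized Gauss map: its image lies in the quadric $Q_2\subset\mathbb{CP}^3$ which, exactly as in Proposition~\ref{com_str}, is biholomorphic to $\bar{\mathbb C}\times\bar{\mathbb C}$, and the two coordinate functions of this decomposition are precisely $\psi_1,\psi_2$ (resp. $\hat\psi_1,\hat\psi_2$). Because $M^*$ and $\hat M^*$ share the underlying Riemann surface and conformal structure of $\Tilde M,\Tilde{\hat M}$, the map $\Tilde\Phi$ is again a conformal diffeomorphism between them.

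With this dictionary in place, the hypotheses of Theorem~1.2 of~\cite{MR3784760} hold for the pair $M^*,\hat M^*$: both are complete and non-flat, the four components $\psi_1,\psi_2,\hat\psi_1,\hat\psi_2$ are non-constant, and for each $i$ the functions $\psi_i$ and $\hat\psi_i$ share $p_i>4$ distinct values while $\psi_i\ne\hat\psi_i\circ\Tilde\Phi$. That theorem yields $\min\{p_1,p_2\}\le 6$, and in the range $p_1\ge 7$, $p_2\ge 7$ it forces $\psi_i\equiv\hat\psi_i\circ\Tilde\Phi$ for at least one $i$, which descends to $\psi_i\equiv\hat\psi_i\circ\Phi$ on $M$. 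The hard part will not be any new estimate but the faithful transfer of hypotheses: one must check that the completeness inequality $ds^2\le d\Tilde s^2$ together with~(\ref{var}) genuinely carries completeness over to $M^*$, and — the only point where the Lorentzian geometry really intervenes — that the two meromorphic functions produced by the map $\Psi$ on $Q_{1,1}^+$ coincide with the standard $\bar{\mathbb C}\times\bar{\mathbb C}$-decomposition of the $\mathbb R^4$ Gauss map on $Q_2$, rather than a Möbius-twisted version of it (cf. Remark~\ref{mob}), so that ``sharing values of $\psi_i$'' means literally the same thing on both sides. Once this identification is confirmed, everything else is bookkeeping and the analytic content of the unicity statement resides entirely in~\cite{MR3784760}.
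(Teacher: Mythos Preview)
Your proposal is correct and follows essentially the same approach as the paper: the paper's entire argument for this theorem is the one-line remark preceding it, namely that passing to the complete metric $d\Tilde s^2=2(1+|\psi_1|^2)(1+|\psi_2|^2)|dh|^2\ge ds^2$ reduces the statement to Theorem~1.2 of~\cite{MR3784760}. You have simply spelled out the bookkeeping (universal covers, the identification of the $\psi_i$ with the $\mathbb R^4$ Gauss-map components) that the paper leaves implicit.
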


Next, we will consider the space-like stationary surface with Gauss map $G=(\psi_1,\psi_2)$ satisfying $\psi_2=f(\psi_1)$, where $f$ is a rational function with degree $m$. Now, assume $\Phi: M\to \hat{M}$ be the conformal diffeomorphism, and $(\psi_1, \psi_2)$ and $(\hat{\psi_1}, \hat{\psi_2} )$ be the Gauss map of $M$ and $\hat{M}$ respectively, where $\psi_2=f(\psi_1)$, $\hat{\psi_2}= f(\hat{\psi_1})$. For brevity, we denote $\psi_1, \hat{\psi}_1\circ \Phi$ by $\psi, \hat{\psi}$.

 If  $M, \hat{M}$ are 2-degenerate $(m=0)$ or  1-degenerate of hyperbolic type($m=1,|E_f|=2$), then they are entire graphs over $\mathbb R^2$ and their universal covering spaces  are conformally equivalent to the whole complex plane. Then the Main Theorem B is followed by the Nevanlinna unicity theorem \cite{MR1555233} for meromorphic functions on the complex plane $\mathbb C$.

If $M, \hat{M}$ are 1-degenerate of elliptic type($m=1, |E_f|=0$), then they can be deformed by  minimal surfaces in $\mathbb R^3$, and the Main Theorem B is followed by Theorem \ref{uni}.

Thus, we only need to consider the proof when $m\ge 2$ and $M, \hat{M}$ are 1-degenerate of parabolic type($m=1, |E_f|=1$). For each $\alpha, \beta \in \bar{\mathbb C}$, the chordal distance\cite{MR1167375} between $\alpha, \beta$ is 
$$|\alpha, \beta|=\frac{|\alpha-\beta|}{\sqrt{1+|\alpha|^2}\sqrt{1+|\beta|^2}},$$
if $\alpha\ne \infty, \beta\ne \infty$, and $|\alpha, \beta|=|\beta, \alpha|=\frac{1}{\sqrt{1+|\alpha|^2}}$, if $\beta=\infty$.

\begin{lem}\cite{MR1219880}\label{le}
   Let $f$ and $\hat{f}$ be two mutually distinct non-constant meromorphic functions on a Riemann surface $M$ and $q$ distinct points $a_1, a_2,\cdots, a_q(q>4)$. Assume that $f^{-1}(a_j)=\hat{f}^{-1}(a_j)(1\le j\le q).$ For $a_0>0$ and $\epsilon$ with $q-4>q\epsilon>0$, set

   $$\lambda:=\left(\Pi_{i=1}^q|f, a_i|\log\left(\frac{a_0}{|f, a_i|^2}\right)\right)^{-1+\epsilon}, $$
   $$\hat{\lambda}:=\left(\Pi_{i=1}^q|\hat{f}, a_i|\log\left(\frac{a_0}{|\hat{f}, a_i|^2}\right)\right)^{-1+\epsilon}, $$
   \begin{equation}\label{tau}
d\tau^2:=|f,\hat{f}|^2\lambda\hat{\lambda}\frac{f'}{1+|f|^2}\frac{\hat{f}'}{1+|\hat{f}|^2}
\end{equation}
   outside the $E:=\cup_{i=1}^qf^{-1}(a_i)$ and $d\tau^2=0$ on $E$. Then, for a suitably chosen $a_0$, $d\tau^2$ is continuous on $M$, and has strictly negative curvature on the set $\{d\tau^2\ne 0\}$.
\end{lem}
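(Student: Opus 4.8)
The plan is to follow the scheme of Fujimoto \cite{MR1219880}: first verify that the conformal density of $d\tau^2$ extends continuously (by zero) across $E$ via a local order count, and then compute the Laplacian of the logarithm of that density and show it is strictly positive on $\{d\tau^2\ne0\}$; the point is that the hypothesis $q>4$, in the sharp form $q-4>q\epsilon$, is exactly what makes the decisive coefficient positive. Write $d\tau^2=\rho^2|dz|^2$ locally, with $\rho^2=|f,\hat f|^2\,\lambda\hat\lambda\,\frac{|f'|}{1+|f|^2}\frac{|\hat f'|}{1+|\hat f|^2}$ (the derivative factors read in modulus, so that $d\tau^2$ is a genuine metric), and recall $K=-\rho^{-2}\Delta\log\rho$, so strict negativity of $K$ on $\{\rho\ne0\}$ is equivalent to $\Delta\log\rho>0$ there. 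I would begin by fixing $a_0$ so large that $\log(a_0/|f,a_i|^2)\ge\log a_0>0$ everywhere — legitimate because the chordal distance never exceeds $1$ — which makes $\lambda,\hat\lambda$ well defined and positive off $E$ and supplies the small quantity $\sum_i\bigl(\log(a_0/|f,a_i|^2)\bigr)^{-1}\le q/\log a_0$ needed at the end.

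For continuity the only issue is a point $z_0$ with $f(z_0)=\hat f(z_0)=a_j$ (they agree by the shared--value hypothesis). After a rotation of the sphere, which preserves the chordal distance, one may assume $a_j$ and $f(z_0),\hat f(z_0)$ finite; poles of $f$ or $\hat f$ lying off $E$ are handled identically in the chart at $\infty$. Setting $\nu:=\nu_{f-a_j}(z_0)\ge1$ and $\hat\nu:=\nu_{\hat f-a_j}(z_0)\ge1$, one has $|f,a_j|\asymp|z-z_0|^{\nu}$ and $\log(a_0/|f,a_j|^2)\asymp|\log|z-z_0||$, while $|f,a_i|$, $|\hat f,a_i|$ and $\log(a_0/|f,a_i|^2)$ for $i\ne j$ stay bounded above and below, and $\nu_{f-\hat f}(z_0)\ge\min(\nu,\hat\nu)$. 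Assembling the contributions of $|f,\hat f|^2$, of $\lambda\hat\lambda$ (whose exponent $-1+\epsilon$ is negative), and of $f',\hat f'$, one finds that $\rho^2$ vanishes at $z_0$ to order at least $2\min(\nu,\hat\nu)+(\nu+\hat\nu)\epsilon-2\ge2\epsilon>0$; hence $d\tau^2\to0$ at $z_0$, so the prescription ``$d\tau^2=0$ on $E$'' is continuous, as is $d\tau^2$ off $E$.

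For the curvature, fix a point of $\{d\tau^2\ne0\}$, where necessarily $f\ne\hat f$, $f'\ne0$ and $\hat f'\ne0$. Expand
\[ 2\log\rho=\log|f,\hat f|^2+\log\lambda+\log\hat\lambda+\log\frac{|f'|}{1+|f|^2}+\log\frac{|\hat f'|}{1+|\hat f|^2} \]
and differentiate term by term, using $\Delta\log|f'|=0$, $\Delta\log(1+|f|^2)=4P$ with $P:=|f'|^2/(1+|f|^2)^2$ the pull-back Fubini--Study density of $f$, the identity $4|\partial_z\log|f,a_i||^2=P(1-|f,a_i|^2)/|f,a_i|^2$, and $\Delta\log v=\Delta v/v-|\nabla v|^2/v^2$ for the iterated logarithms $v=\log(a_0/|f,a_i|^2)$. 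Collecting the coefficient of $P$ (that of $\hat P:=|\hat f'|^2/(1+|\hat f|^2)^2$ being symmetric), the harmful terms amounting to $-8P$ — four from $-\log(1+|f|^2)$ inside $\log|f,\hat f|^2$ and four from $-\log(1+|f|^2)$ inside $\log\frac{|f'|}{1+|f|^2}$ — are opposed by the helpful $2q(1-\epsilon)P$ coming from $\prod_i|f,a_i|^{-1+\epsilon}$, together with a nonnegative contribution from the iterated logarithms and an error bounded by $4(1-\epsilon)(q/\log a_0)P$. This gives
\[ 2\Delta\log\rho\ \ge\ \bigl(2q(1-\epsilon)-8-4q(1-\epsilon)/\log a_0\bigr)(P+\hat P), \]
and since $q(1-\epsilon)>4$ by hypothesis, choosing $a_0$ large enough makes the bracket a positive constant; as $P,\hat P>0$ on $\{d\tau^2\ne0\}$, we conclude $\Delta\log\rho>0$, i.e.\ $K<0$ there.

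The hard part is the bookkeeping in this last step: the iterated--logarithm factors must be tracked with care, because $\Delta\log v$ produces both a positive term $4P/v$ — which enters with the sign $-1+\epsilon$ and is therefore harmful — and a positive term $4P(1-|f,a_i|^2)/(v^2|f,a_i|^2)$ that is helpful, and one must check that for $a_0$ large the harmful contributions are dominated, leaving a strictly positive multiple of $P+\hat P$. The two identities $\Delta\log(1+|f|^2)=4P$ and $4|\partial_z\log|f,a_i||^2=P(1-|f,a_i|^2)/|f,a_i|^2$ are precisely what make the constants come out as the integer $2q$ against $8$, which explains why the threshold is $q=4$. Everything else is routine local analysis.
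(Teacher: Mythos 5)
The paper states this lemma without proof, citing Fujimoto \cite{MR1219880}, and your argument is a correct reconstruction of Fujimoto's original proof along essentially the same lines. Both the local order count at points of $E$ (vanishing of order at least $2\min(\nu,\hat\nu)+(\nu+\hat\nu)\epsilon-2\ge 2\epsilon>0$, with the iterated logarithms raised to the negative power $-1+\epsilon$) and the curvature bookkeeping check out, including the two key identities $\Delta\log(1+|f|^2)=4P$ and $4|\partial_z\log|f,a_i||^2=P(1-|f,a_i|^2)/|f,a_i|^2$, so that the bracket $2q(1-\epsilon)-8-4q(1-\epsilon)/\log a_0$ is positive for large $a_0$ precisely under the hypothesis $q-4>q\epsilon$.
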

\begin{lem}\cite{MR1219880}\label{gu}
  Let $f$ and $\hat{f}$ be two mutually distinct non-constant meromorphic functions on a Riemann surface $M$ satisfies  the same assumption as in Lemma \ref{le}, then for the metric $d\tau^2$  defined by (\ref{tau}), there is a constant $C>0$ such that 
  $$d\tau^2\le C\frac{4R^2}{(R^2-z^2)^2}|dz|^2.$$
\end{lem}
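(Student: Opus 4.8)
The plan is to prove this as a degenerate version of the Ahlfors--Schwarz lemma (Theorem \ref{12}), carried out by the maximum principle. Since the assertion only involves a coordinate disc, I would fix a holomorphic coordinate $z$ identifying a disc $\mathbb D(R)\subset M$ and write the (a priori degenerate) conformal pseudo-metric as $d\tau^2=\rho^2|dz|^2$ with $\rho\ge 0$. By Lemma \ref{le}, for a suitable $a_0$ the density $\rho$ is continuous on $\mathbb D(R)$, vanishes exactly on the discrete set $E=\cup_{i=1}^q f^{-1}(a_i)$, and is positive, of class $C^2$, with Gaussian curvature bounded above by a negative constant on $\mathbb D(R)\setminus E$; in the convention of the excerpt this reads $\Delta\log\rho\ge\rho^2$ there after a harmless rescaling $d\tau^2\mapsto c\,d\tau^2$ (this rescaling, needed when the curvature is $\le-c$ rather than $\le-1$, is exactly the source of the constant $C=1/c$ in the final bound). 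The goal is then the pointwise inequality $\rho\le\frac{2R}{R^2-|z|^2}$, which squares to the claimed estimate.

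To run the comparison I would fix $r<R$ and let $\sigma_r=\frac{2r}{r^2-|z|^2}$ be the Poincar\'e density of $\mathbb D(r)$, which has curvature $-1$, i.e. $\Delta\log\sigma_r=\sigma_r^2$. Consider $u_r:=\log\rho-\log\sigma_r$, extended by $u_r=-\infty$ on $E$. Because $\overline{\mathbb D(r)}\subset\mathbb D(R)$, the density $\rho$ is bounded on $\overline{\mathbb D(r)}$, so $u_r\to-\infty$ as $|z|\to r$ (where $\sigma_r\to\infty$) and also $u_r\to-\infty$ along $E$ (where $\rho\to0$). Hence the upper semicontinuous, bounded-above function $u_r$ attains its maximum at an interior point $z^*\in\mathbb D(r)\setminus E$, where $\rho$ is smooth and positive. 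At $z^*$ one has $\Delta u_r(z^*)\le 0$, while $\Delta u_r\ge\rho^2-\sigma_r^2=\sigma_r^2(e^{2u_r}-1)$; since $\sigma_r^2>0$ this forces $u_r(z^*)\le 0$, so $u_r\le 0$ throughout $\mathbb D(r)$, i.e. $\rho\le\sigma_r$. Letting $r\uparrow R$ yields $\rho\le\frac{2R}{R^2-|z|^2}$ on $\mathbb D(R)$, which is precisely $d\tau^2\le C\frac{4R^2}{(R^2-|z|^2)^2}|dz|^2$.

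The main obstacle is the degeneracy of $d\tau^2$ along $E$, which blocks a direct appeal to Theorem \ref{12} (whose hypothesis demands a strictly positive $C^2$ density). The resolution is the observation that the degeneracy in fact \emph{helps}: because $\rho\to0$ on the discrete set $E$, the comparison function $u_r$ plunges to $-\infty$ there, so its maximum is automatically attained away from $E$, on the open set where the curvature bound of Lemma \ref{le} is available and the classical computation applies; the cutoff $r<R$ and the limit $r\uparrow R$ then remove all boundary dependence. The one point demanding genuine care is ensuring that Lemma \ref{le} supplies curvature bounded above by a strictly negative \emph{constant} rather than merely negative pointwise values, and it is exactly this uniform bound that the logarithmic weight factors, the exponent $-1+\epsilon$, and the choice of $a_0$ are engineered to guarantee; granting it, the maximum-principle argument closes the estimate.
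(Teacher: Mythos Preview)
The paper does not supply a proof of this lemma; it is quoted directly from Fujimoto \cite{MR1219880}. Your argument is a correct reconstruction of the degenerate Ahlfors--Schwarz comparison that underlies Fujimoto's result, and the key observation---that the zeros of $\rho$ on $E$ force $u_r\to-\infty$ there and hence push the maximum of $u_r$ into the region where $\rho$ is smooth and the curvature inequality is available---is exactly the right one.

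The only point worth flagging is the one you yourself raise at the end: Lemma \ref{le} as phrased in this paper asserts only \emph{strictly negative} curvature on $\{d\tau^2\ne 0\}$, not curvature bounded above by a uniform negative constant, and it is the uniform bound that licenses your rescaling $d\tau^2\mapsto c\,d\tau^2$. Fujimoto's original lemma does in fact deliver a uniform bound (this is precisely what the logarithmic weights and the choice of $a_0$ are designed to produce), so your rescaling step is justified by going back to the source rather than by the statement as transcribed here.
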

Combine the completeness of  $M$ and inequality (\ref{metric}), the conformal metric $ds^2$ 
can be stated as follows:
$$ds^2=\Pi_{i=1}^l
|\psi-c_i|(1+|\psi|^2)^{m-l}|h|^2|dz|^2,$$
where $h$ is a  holomorphic function with no zeros on a simply connected open set $U$. 

Since there exists a local non-zero holomorphic function $\zeta$ on $U$ such that $ds^2=|\zeta|^2\Phi^*(d\hat{s}^2)$, thus 
$$\Pi_{i=1}^l
|\psi-c_i|^2(1+|\psi|^2)^{m-l}|h|^2|dz|^2=|\zeta|^2\Pi_{i=1}^l
|\hat{\psi}-c_i|^2(1+|\hat{\psi}|^2)^{m-l}|\hat{h}|^2|dz|^2,$$
$$\Rightarrow \Pi_{i=1}^l
|\psi-c_i|(1+|\psi|^2)^{\frac{m-l}{2}}|h|=|\zeta|\Pi_{i=1}^l
|\hat{\psi}-c_i|(1+|\hat{\psi}|^2)^{\frac{m-l}{2}}|\hat{h}|.$$
Let $k$ be a non-zero holomorphic function on $U$ satisfies $k^2=h\hat{h}\zeta$, then 
$$ds^2=|k|^2\Pi_{i=1}^l
(|\psi-c_i||\hat{\psi}-c_i|)(1+|\psi|^2)^{\frac{m-l}{2}}(1+|\hat{\psi}|^2)^{\frac{m-l}{2}}|dz|^2.$$
Assume there are $q$ distinct points $a_1,\cdots, a_q\in \bar{\mathbb C}$ such that $\psi^{-1}(a_i)=\hat{\psi}^{-1}(a_i)$ for all $1\le i\le q$. Since $\psi^{-1}(E_f)=\hat{\psi}^{-1}(E_f)=\emptyset$, then $E_f\subset \{a_1,\cdots, a_q\}$. We may assume $a_q=\infty\in E_f$.

Since $q\ge m-|E_f|+6=m-l+5$, then $q-m+l-4>0$. Take a positive real number $\delta$ with
$$\frac{q-m+l-4}{q}>\delta>\min\left\{\frac{q-m+l-4}{m-l+q}, \frac{q-4-2m+2l}{q}\right\}$$
and set 
$$p=\frac{m-l}{q-4-q\delta}<1.$$
Then 
$$\frac{p}{1-p}>1,\quad \frac{\delta p}{1-p}>1.$$

If $\psi\not\equiv\hat{\psi}$, consider the function
\begin{equation}\label{eta}   
\Tilde{\eta}=k^{\frac{1}{1-p}}\frac{\Pi_{i=1}^l
\left((\psi-c_i)(\hat{\psi}-c_i)\right)^{\frac{1}{2(1-p)}}\Pi_{j=1}^{q-1}\left((\psi-a_j)(\hat{\psi}-a_j)\right)^{\frac{p(1-\delta)}{2(1-p)}}}{\left((\psi-\hat{\psi})^2\psi'\hat{\psi}'\Pi_{j=1}^{q-1}(1+|a_j|^2)^{1-\delta}\right)^{\frac{p}{2(1-p)}}}
\end{equation}
on $M'=M\setminus E$, where $E=\{z\in M, \psi'(z)\hat{\psi}'(z)=0, \text{ or } \psi(z)=\hat{\psi}(z)\}.$

Denote 
\begin{equation}\label{f}   
w=\Tilde{F}(z)=\int\Tilde{\eta} dz,\end{equation}
by similar method in the proof of Main Theorem A, we can find a neighborhood $U$ of point $0$ and a  positive number R, such that the inverse map $\Tilde{H}=(\Tilde{F}|_U)^{-1}: \mathbb D(R)\to U\subset M' $ is a holomorphic map. Since 
$d\sigma^2=|\Tilde{\eta}|^2|dz|^2$ has strictly negative  curvature on $M'$  by Lemma \ref{le}, then $R<\infty$.
And there exists a point
$a$ with $|a|=R$, such that for the line segment
$$l_a=\{ta, 0\le t< 1\},$$
the image $\Tilde{\gamma}=\Tilde{H}(l_a)$ is a divergent curve in $M'$ as $t$ tends to $1$.


\begin{lem}
There exists a point  with $|a|=R$ such that $\Tilde{H}(l_a)$ is a divergent curve in $M$.   
\end{lem}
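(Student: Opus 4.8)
The plan is to argue by contradiction, mirroring the corresponding step (``Lemma'') in the proof of Main Theorem A. Suppose that for every $a$ with $|a|=R$ the divergent curve $\tilde\gamma=\tilde H(l_a)$ in $M'$ does not diverge in $M$; then $\tilde\gamma$ must accumulate at some point $z_0\in M$ lying in the exceptional set $E=\{\psi'\hat\psi'=0\}\cup\{\psi=\hat\psi\}$. I would split into the relevant cases at such a $z_0$ and show in each case that the length $\int_{\tilde\gamma}d\sigma=\int_{\tilde\gamma}|\tilde\eta|\,|dz|=\int_{l_a}|dw|=R<\infty$ is in fact forced to be infinite, a contradiction. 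This is the standard ``the metric blows up near the bad points'' computation, carried out via the order of vanishing $\nu_{\tilde\eta}(z_0)$ of the multivalued density $\tilde\eta$ from \eqref{eta}.

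The key computations are the following. First, if $z_0$ is a point where $\psi'(z_0)\hat\psi'(z_0)=0$ but $\psi(z_0),\hat\psi(z_0)\notin\{a_1,\dots,a_q\}$ and $\psi(z_0)\ne\hat\psi(z_0)$, then from \eqref{eta} the only contribution to $\nu_{\tilde\eta}(z_0)$ comes from the factor $(\psi'\hat\psi')^{-p/(2(1-p))}$ in the denominator, giving $\nu_{\tilde\eta}(z_0)=-\tfrac{p}{2(1-p)}(\nu_{\psi'}(z_0)+\nu_{\hat\psi'}(z_0))\le -\tfrac{p}{2(1-p)}<-1$ by the choice $p/(1-p)>1$ (and, since $q-4>q\delta>0$ forces $p<1$, all exponents make sense). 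Second, if $\psi(z_0)=a_j=\hat\psi(z_0)$ for some $j\le q-1$ (recall $\psi^{-1}(a_j)=\hat\psi^{-1}(a_j)$), then with $r=\nu_\psi(z_0)\ge 1$, $\hat r=\nu_{\hat\psi}(z_0)\ge 1$ one uses $\nu_{\psi'}(z_0)=r-1$, $\nu_{\hat\psi'}(z_0)=\hat r-1$, and $\nu_{\psi-\hat\psi}(z_0)\ge\min(r,\hat r)$ to get, after cancellation, $\nu_{\tilde\eta}(z_0)\le\tfrac{p}{2(1-p)}\big((r+\hat r)(1-\delta)-2\min(r,\hat r)-(r-1)-(\hat r-1)\big)$, which simplifies to a negative multiple of $\delta$ times $(r+\hat r)$ plus $\tfrac{p}{1-p}$; bounding $r,\hat r\ge1$ and using $\delta p/(1-p)>1$ shows $\nu_{\tilde\eta}(z_0)<-1$. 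Third, if $\psi(z_0)=\hat\psi(z_0)$ with this common value not among the $a_j$ (so $z_0$ is in $E$ only because $\psi=\hat\psi$ there), the factor $(\psi-\hat\psi)^{-2\cdot p/(2(1-p))}$ contributes $\nu_{\tilde\eta}(z_0)=-\tfrac{p}{1-p}\nu_{\psi-\hat\psi}(z_0)\le -\tfrac{p}{1-p}<-1$. In all cases $\nu_{\tilde\eta}(z_0)<-1$, so near $z_0$ one has $|\tilde\eta|\gtrsim|z-z_0|^{-1-\epsilon_0}$ for some $\epsilon_0>0$, whence $\int_{\tilde\gamma}|\tilde\eta|\,|dz|=+\infty$, contradicting $R<\infty$.

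One subtlety I would be careful about: the value $a_q=\infty\in E_f$ is handled separately. Near a point $z_0$ with $\psi(z_0)=\hat\psi(z_0)=\infty$ one should either pass to the coordinate $1/\psi$, $1/\hat\psi$ (so that $\infty$ becomes a finite value and the product $\Pi_{j=1}^{q-1}(1+|a_j|^2)^{1-\delta}$ together with the normalization built into $\tilde\eta$ makes the chordal-distance bookkeeping work), or observe directly from the explicit form of $\tilde\eta$ how the poles of $\psi,\hat\psi$ interact with the factors $(\psi-c_i)(\hat\psi-c_i)$ and $(1+|\psi|^2),(1+|\hat\psi|^2)$ encoded through $k$; in each subcase the same inequality $\nu_{\tilde\eta}(z_0)<-1$ emerges because the exponent count at $\infty$ is $-\tfrac{p}{2(1-p)}\cdot(\text{something}\ge 2)$ for the $\psi=\hat\psi$ contribution and the $c_i$– and polynomial–degree contributions are the very ones that made $p<1$ in the first place.

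The main obstacle is the third case above, and more generally the bookkeeping when $\psi(z_0)=\hat\psi(z_0)$ coincide at a point that is not one of the shared values: the density $d\sigma^2=|\tilde\eta|^2|dz|^2$ comes from Lemma \ref{le}, whose negative curvature was only verified on $\{d\tau^2\ne 0\}$, and one must make sure that $\tilde\eta$ as written genuinely has a pole (not a removable point) at such $z_0$ — i.e., that the exponent $-p/(1-p)$ multiplying $\nu_{\psi-\hat\psi}$ really is the net order and is not cancelled by compensating zeros of $k$, $(\psi-c_i)$, or $(\psi-a_j)$. Since $k$ is zero-free and the points $c_i,a_j$ are distinct from the common value of $\psi,\hat\psi$ in this case, no cancellation occurs, and $\nu_{\tilde\eta}(z_0)=-p/(1-p)<-1$ stands; establishing this cleanly (and noting that $\psi\not\equiv\hat\psi$ keeps $\psi-\hat\psi$ a nonzero holomorphic function with only discrete zeros) is the crux. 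With that in hand the length estimate closes the lemma.
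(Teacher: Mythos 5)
Your proposal follows the same strategy as the paper: assume $\tilde H(l_a)$ converges to a point $z_0\in E$, compute the order $\nu_{\tilde\eta}(z_0)$ case by case, conclude $\nu_{\tilde\eta}(z_0)\le -1$ so that $\int_{\tilde H(l_a)}|\tilde\eta|\,|dz|=\int_{l_a}|dw|=R$ is forced to be infinite. Your second and third cases (common value equal to some $a_j$, and $\psi(z_0)=\hat\psi(z_0)$ not among the $a_j$) reproduce the paper's Case 1 correctly, and your care about non-cancellation (zero-free $k$, $\psi^{-1}(E_f)=\emptyset$) is sound. Your worry about $a_q=\infty$ is actually moot: since $\infty\in E_f$ and $\psi^{-1}(E_f)=\hat\psi^{-1}(E_f)=\emptyset$, the functions $\psi,\hat\psi$ have no poles at all, so no coordinate change is needed.

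The genuine gap is in your case (a), where $\psi'(z_0)\hat\psi'(z_0)=0$ but $\psi(z_0)\ne\hat\psi(z_0)$ and neither value is an $a_j$. There you assert $-\tfrac{p}{2(1-p)}(\nu_{\psi'}(z_0)+\nu_{\hat\psi'}(z_0))\le-\tfrac{p}{2(1-p)}<-1$ ``by the choice $p/(1-p)>1$''; but $p/(1-p)>1$ only gives $p/(2(1-p))>\tfrac12$, so if exactly one of $\psi',\hat\psi'$ has a simple zero at $z_0$ the order is $-\tfrac{p}{2(1-p)}$, which may well exceed $-1$ and the length estimate fails. (The paper's own Case 2 asserts $\nu_{\tilde\eta}(z_0)<-\tfrac{p}{1-p}$ here and thus elides the same factor of $2$ coming from the exponent $\tfrac{p}{2(1-p)}$ on $\psi'\hat\psi'$.) The repair is to note that one may assume $q=m-|E_f|+6=m-l+5$ exactly (discarding extra shared values); then the admissible range forces $\delta<\tfrac{1}{q}$, and $\tfrac{\delta p}{1-p}>1$ yields $\tfrac{p}{1-p}>\tfrac{1}{\delta}>q\ge 6$, hence $\tfrac{p}{2(1-p)}>3>1$ and your inequality stands. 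Without some such restriction on $\delta$ (for large $q$ the prescribed interval pushes $\delta$ toward $1$ and $p/(2(1-p))$ toward $\tfrac12$), this case is not closed.
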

\begin{proof}
 It is suffices  to consider that $\Tilde{H}(l_a)$ tends to a point $z_0\in E$.
 
 \textbf{Case 1}: 
 If $\psi(z_0)=\hat{\psi}(z_0)$ and there exists some $a_j,$ such that $\psi(z_0)=\hat{\psi(z_0)}=a_j$, then 
 \begin{equation*}
\begin{split}
    \nu_{\Tilde{\eta}}(z_0) &=  
    \frac{p}{1-p}\left((1-\delta)(\nu_{\psi}(z_0)+\nu_{\hat{\psi}}(z_0))
    -2\min\{\nu_{\psi}(z_0),\nu_{\hat{\psi}}(z_0)\}+\nu_{\psi}(z_0)+\nu_{\hat{\psi}}(z_0)-2\right)\\
    &\le -\frac{\delta p}{1-p}(\nu_{\psi}(z_0)+\nu_{\hat{\psi}}(z_0))\\
    &<-\frac{\delta p}{1-p}.
\end{split}
\end{equation*}
Otherwise, $$\nu_{\Tilde{\eta}}(z_0)=-\frac{p}{1-p}(2\min\{\nu_{\psi}(z_0),\nu_{\hat{\psi}}(z_0)\})\le -\frac{2\delta p}{1-p}.$$
Since $\frac{\delta p}{1-p}>1$, then
\begin{align*}
   R&=\int_{l_a}|dw|=\int_{\Tilde{\gamma}}\left|\frac{dw}{dz}\right||dz|\\
   &=\int_{\Tilde{\gamma}}|\Tilde{\eta}||dz|=\infty,
\end{align*}
which contradicts with $R<\infty$.

\textbf{Case 2}: $\Tilde{H}(l_a)$ tends to a point $z_0$ such that$\psi'(z_0)\hat{\psi'}(z_0)=0$,  we can easily get $\nu_{\Tilde{\eta}}(z_0)<-\frac{p}{1-p}<-1$. This also contradicts that $R$ is finite.
\end{proof}

Combine with (\ref{eta}) and (\ref{f}), we obtain
$$\left|\frac{dw}{dz}\right|=\frac{|k|\Pi_{i=1}^l
\left(|\psi-c_i||\hat{\psi}-c_i|\right)^{\frac 1 2}\Pi_{j=1}^{q-1}\left(|\psi-a_j||\hat{\psi}-a_j|\right)^{\frac{p(1-\delta)}{2}}}{\left(|\psi-\hat{\psi}|^2|\psi'||\hat{\psi}'|\Pi_{j=1}^{q-1}(1+|a_j|^2)^{1-\delta}\right)^{\frac p 2}}\left|\frac{dw}{dz}\right|^{p}.$$

Set $g(w)=\psi(\Tilde{H}(w)), \hat{g}(w)=\hat{\psi}(\Tilde{H}(w))$, since $g'=\psi'\frac{dz}{dw}, 
 \hat{g}'=\hat{\psi}'\frac{dz}{dw}$, then 
$$\left|\frac{dz}{dw}\right|^2=\frac{\left(|g-\hat{g}|^2|g'||\hat{g}'|\Pi_{j=1}^{q-1}(1+|a_j|^2)^{1-\delta}\right)^{p}}{|k\circ \Tilde{H}|^2\Pi_{i=1}^l
\left(|g-c_i||\hat{g}-c_i|\right)\Pi_{j=1}^{q-1}\left(|g-a_j||\hat{g}-a_j|\right)^{p(1-\delta)}}$$
Therefore, the induced metric of  $\mathbb{D}(R)$ from $M$ by $\Tilde{H}$ is 

\begin{align*}
\Tilde{H}^*ds^2&=|k\circ \Tilde{H}|^2\Pi_{i=1}^l
(|g-c_i||\hat{g}-c_i|)(1+|g|^2)^{\frac{m-l}{2}}(1+|\hat{g}|^2)^{\frac{m-l}{2}}\left|\frac{dz}{dw}\right|^2|dw|^2 \\
&=\left(\frac{(1+|g|^2)^{\frac{m-l}{2p}}(1+|\hat{g}|^2)^{\frac{m-l}{2p}}|g-\hat{g}|^2|g'||\hat{g}'|\Pi_{j=1}^{q-1}(1+|a_j|^2)^{1-\delta}}{\Pi_{j=1}^{q-1}|g-a_j|^{1-\delta}|\hat{g}-a_j|^{1-\delta}}\right)^{p}|dw|^2\\
&=\left(\mu^2\Pi_{i=1}^q(|g,a_i||\hat{g},a_i|)^{\epsilon}\left(\Pi_{i=1}^q\log\frac{a_0}{|g,a_i|^2}\log\frac{a_0}{|\hat{g},a_i|^2}\right)^{1-\epsilon}\right)^p|dw|^2,
\end{align*}
where $\mu$ is the function with $d\tau^2=\mu^2|dw|^2$. On the other hand, for a given $\epsilon$, it holds that 
$$\lim_{x\to 0}x^{\epsilon}\log^{1-\epsilon}\frac{a_0}{x^2}<\infty. $$
Thus there exists a constant $C_1$ such that
$$\Tilde{H}^*ds^2\le C_1\mu^{2p}|dw|^2.$$
By Lemma \ref{gu}, 
$$\int_{\Tilde{\gamma}}ds=\int_{{l_a}}\Tilde{H}^*ds<C\int_{l_a}\left(\frac{R}{R^2-|w|^2}\right)^p|dw|<\infty,$$
which contradicts with the completeness of $M$. We have necessarily $\psi=\hat{\psi}$, and the proof of Main Theorem B is completed.

\begin{rem}
In this context, we cannot uniformly discuss whether the results are optimal or not, because values of $m$ and $|E_f|$ vary. Although for each fixed $m$, the value of $|E_f|$ may also have several situations.
\end{rem}

\backmatter

\bibliography{bibouli}
\end{document}